\title{Recent progress on symplectic embedding problems in four dimensions}
\author{Michael Hutchings}
\date{}
\newtheorem{theorem}{Theorem}
\newtheorem{proposition}[theorem]{Proposition}
\newtheorem{corollary}[theorem]{Corollary}
\newtheorem{lemma}{Lemma}
\newtheorem{conjecture}[theorem]{Conjecture}
\theoremstyle{definition}
\newtheorem*{remark}{Remark}
\newcommand{\mc}[1]{{\mathcal #1}}
\newcommand{\eqdef}{\;{:=}\;}
\newcommand{\C}{{\mathbb C}}
\newcommand{\R}{{\mathbb R}}
\newcommand{\N}{{\mathbb N}}
\newcommand{\Z}{{\mathbb Z}}
\newcommand{\op}{\operatorname}
\newcommand{\Ker}{\op{Ker}}
\newcommand{\CP}{{\mathbb C}{\mathbb P}}
\newcommand{\CPbar}{\overline{\CP}}
\newcommand{\bpm}{\begin{pmatrix}}
\newcommand{\epm}{\end{pmatrix}}
\renewcommand{\epsilon}{\varepsilon}
\begin{document}

\maketitle

  \begin{abstract} We survey some recent progress on understanding
    when one four-dimensional symplectic manifold can be
    symplectically embedded into another.  In 2010, McDuff established
    a number-theoretic criterion for the existence of a symplectic
    embedding of one four-dimensional ellipsoid into another.  This is
    related to previously known criteria for when a disjoint union of
    balls can be symplectically embedded into a ball.  The new theory
    of ``ECH capacities'' gives general obstructions to symplectic
    embeddings in four dimensions which turn out to be sharp in the
    above cases.
 \end{abstract}

Recall that a {\em symplectic manifold\/} is a pair $(X,\omega)$, where
$X$ is an oriented smooth manifold of dimension $2n$ for some integer
$n$, and $\omega$ is a closed $2$-form on $X$ such that the top
exterior power
$\omega^n>0$ on all of $X$.  The basic example of a symplectic
manifold is $\C^n=\R^{2n}$ with coordinates $z_j=x_j+iy_j$ for
$j=1,\ldots,n$, with the standard symplectic form
\begin{equation}
\label{eqn:std}
\omega_{std}=\sum_{j=1}^ndx_jdy_j.
\end{equation}
If $(X_0,\omega_0)$ and $(X_1,\omega_1)$ are two symplectic manifolds
of dimension $2n$, it is interesting to ask whether there exists a
{\em symplectic embedding\/} $\phi:(X_0,\omega_0)\to (X_1,\omega_1)$,
i.e.\ a smooth embedding $\phi:X_0\to X_1$ such that
$\phi^*\omega_1=\omega_0$.

It turns out that the answer to this question is unknown, or only
recent known, even for some very simple examples such as the
following.  If
$a_1,\ldots,a_n>0$, define the ellipsoid
\[
E(a_1,\ldots,a_n) = \left\{(z_1,\ldots,z_n)\in\C^n \;\bigg|\;
\pi\sum_{j=1}^n\frac{|z_j|^2}{a_j}\le 1\right\}.
\]
In particular, define the ball
\[
B(a)=E(a,\ldots,a).
\]
Also define the polydisk
\[
P(a_1,\ldots,a_n) = \left\{(z_1,\ldots,z_n)\in\C^n \;\big|\; \pi|z_j|^2\le
a_j\right\}.
\]
In these examples the symplectic form is taken to be the restriction
of $\omega_{std}$.

An obvious necessary condition for the existence of a symplectic
embedding $\phi:(X_0,\omega_0)\to(X_1,\omega_1)$ is the volume constraint
\begin{equation}
\label{eqn:volumeConstraint}
\op{vol}(X_0,\omega_0)\le \op{vol}(X_1,\omega_1),
\end{equation}
where the volume of a symplectic manifold is defined by
\[
\op{vol}(X,\omega)=\frac{1}{n!}\int_X\omega^n.
\]
However in dimension greater than two, the volume constraint
\eqref{eqn:volumeConstraint} is far from sufficient, even for convex
subsets of $\R^{2n}$, as shown by the famous:

\medskip
\noindent
{\bf Gromov nonsqueezing theorem} \cite{gromov}
{\em 
There exists a symplectic embedding $B(r)\to
P(R,\infty,\ldots,\infty)$ if and only if $r\le R$.
}

\medskip

Let us now restrict to the case of dimension four.  As we will see
below, symplectic embedding problems are more tractable in four
dimensions than in higher dimensions, among other reasons due to the
availability of Seiberg-Witten theory.  But symplectic embedding
problems in four dimensions are still hard. For example, the question
of when one four-dimensional ellipsoid can be symplectically embedded
into another was answered only in 2010, by McDuff. (The analogous
question in higher dimensions remains open.)  To state the result, if
$a$ and $b$ are positive real numbers, and if $k$ is a positive
integer, define $(a,b)_k$ to be the $k^{th}$ smallest entry in the
array $(am+bn)_{m,n\in\N}$, counted with repetitions.  Denote the
sequence $((a,b)_{k+1})_{k\ge 0}$ by $\mc{N}(a,b)$.  For example,
\[
\mc{N}(1,2)=(0,1,2,2,3,3,4,4,4,5,5,5,\ldots).
\]
If $\{c_k\}_{k\ge 0}$ and $\{c'_k\}_{k\ge 0}$ are two sequences of
real numbers indexed by nonnegative integers, the notation $\{c_k\}\le
\{c'_k\}$ means that $c_k\le c'_k$ for every $k\ge 0$.

\begin{theorem}[McDuff \cite{m-h}]
\label{thm:ellipsoid}
There exists a symplectic embedding $\op{int}(E(a,b))\to E(c,d)$ if
and only if $\mc{N}(a,b)\le \mc{N}(c,d)$.
\end{theorem}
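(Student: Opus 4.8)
The plan is to treat the two implications separately, unified by the principle that the sequence $\mc{N}(a,b)$ records the \emph{ECH capacities} of the ellipsoid $E(a,b)$, and that these capacities are both monotone under symplectic embeddings (which gives necessity) and sharp for this class of domains (which gives sufficiency).

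\textbf{Necessity.} First I would establish that symplectic embeddings are obstructed by a sequence of numerical invariants $c_0\le c_1\le\cdots$ with the monotonicity property that a symplectic embedding $\op{int}(X_0)\hookrightarrow X_1$ forces $c_k(X_0)\le c_k(X_1)$ for all $k$. These are the ECH capacities, built from the embedded contact homology of the contact boundary together with its action filtration. The key computation is that for irrational ratio $a/b$ the boundary $\partial E(a,b)$ carries exactly two Reeb orbits, and its ECH is generated by their iterates; the generator with multiplicities $(m,n)$ has action $am+bn$, so that after ordering by action one finds $c_k(E(a,b))=(a,b)_{k+1}$, the $k$-th entry of $\mc{N}(a,b)$. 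Granting monotonicity, the existence of $\op{int}(E(a,b))\hookrightarrow E(c,d)$ then immediately yields $\mc{N}(a,b)\le \mc{N}(c,d)$, and continuity of both $c_k$ and $N_k$ in their arguments lets me pass from the irrational case to all positive reals.

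\textbf{Sufficiency.} This is where the real content lies, and the strategy is to reduce to a ball-packing problem. For rational $a/b$ one associates to $E(a,b)$ a finite \emph{weight expansion} $w(a,b)=(w_1,\ldots,w_M)$ via a continued-fraction/Euclidean-algorithm decomposition, with the property (McDuff's reduction theorem) that $\op{int}(E(a,b))$ embeds into a four-manifold precisely when the disjoint union of balls $\coprod_i\op{int}(B(w_i))$ does. Applying this to source and target converts $\op{int}(E(a,b))\hookrightarrow E(c,d)$ into an equivalent packing problem phrased through $w(a,b)$ and $w(c,d)$. I would then translate ball packing into the existence of a symplectic form in a prescribed cohomology class on a blow-up of $\CP^2$, with the exceptional divisors carrying the prescribed areas. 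By the Li--Liu--McDuff description of the symplectic cone of a rational surface, such a form exists if and only if the class has positive square (the volume constraint) and pairs positively with every class represented by an embedded symplectic sphere of square $-1$; the existence of these representatives is exactly what Seiberg--Witten theory supplies in dimension four.

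\textbf{The main obstacle.} The crux is then a purely combinatorial identification: one must show that the infinite family of positivity constraints against all exceptional classes is \emph{equivalent} to the single chain of inequalities $\mc{N}(a,b)\le \mc{N}(c,d)$. This requires understanding how the exceptional classes of the blow-up are organized --- they are generated from a few obvious ones by iterated Cremona transformations --- and matching the resulting arithmetic with the counting function governing $\mc{N}$. I expect this step, rather than the input from gauge theory, to be the hard part, since it is precisely here that the number theory of the sequences $\mc{N}(a,b)$ must be reconciled with the geometry of exceptional spheres. Once it is in place, the argument shows that the ECH obstructions already account for every obstruction, which is what makes the criterion sharp.
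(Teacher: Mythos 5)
Your overall scaffolding coincides with the paper's: the necessity argument (Monotonicity plus the Ellipsoid axiom, with a continuity argument) is exactly Corollary~\ref{cor:necessary}(a), and your chain for sufficiency---weight expansion, reduction to ball packing, blow-up of $\CP^2$, positivity on the symplectic cone supplied by Seiberg--Witten theory---is the content of Proposition~\ref{prop:me}, Proposition~\ref{prop:tfae} and Theorem~\ref{thm:packing}. But there is a genuine gap at the step you yourself label the main obstacle: you give no argument that the infinite family of inequalities coming from exceptional classes (equivalently, the inequalities \eqref{eqn:PO} over all tuples satisfying \eqref{eqn:dsum}) is implied by the single hypothesis $\mc{N}(a,b)\le\mc{N}(c,d)$. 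Everything up to that point is a quotation of known results; the implication you defer is precisely the content of the theorem, so the proposal as written is a reduction of the problem, not a proof. (A small inaccuracy in the reduction itself: the packing problem involves the weight expansion of $(d-c,d)$, i.e.\ of the complement of the target ellipsoid in its enclosing ball, not of $(c,d)$; see Proposition~\ref{prop:me}.)

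The paper closes this gap without ever carrying out the combinatorial matching you anticipate. By the calculation in Corollary~\ref{cor:necessary}(b), all of the packing inequalities \eqref{eqn:PO} follow from a single inequality of sequences, $c_\bullet(B(a,b)\sqcup B(d-c,d))\le c_\bullet(B(d))$; and this sequence inequality is deduced from $\mc{N}(a,b)\le\mc{N}(c,d)$ purely formally, by applying Monotonicity to the two explicit toric embeddings \eqref{eqn:emb1} and \eqref{eqn:ome1} and using the Disjoint Union axiom together with the fact that the operation $\#$ respects inequalities of sequences. Thus the hypothesis enters only through the Ellipsoid axiom, and the exceptional classes never have to be confronted with the number theory of $\mc{N}$. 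The route you sketch instead---generating $\mc{E}_m$ by Cremona transformations and reconciling the resulting arithmetic with $\mc{N}$---is essentially McDuff's second proof in \cite{m-h}, which replaces every appeal to Monotonicity by algebraic identities such as $\mc{N}(a,b)=\mc{N}(a,a)\#\mc{N}(a,b-a)$ for rational $a/b$ with $a<b$, using the explicit description of $\mc{E}_m$ from \cite{lili}. So your plan can in principle be completed, but as it stands it stops exactly where the real work begins; you should either carry out that combinatorics or adopt the capacity-formalism shortcut above.
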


Note that given specific real numbers $a,b,c,d$, it can be a nontrivial
number-theoretic problem to decide whether $\mc{N}(a,b)\le
\mc{N}(c,d)$.  For example, consider the special case where $c=d$,
i.e.\ the problem of symplectically embedding an ellipsoid into a
ball.  By scaling, we can encode this problem into a single
function $f$ defined as follows: If $a$ is a positive real number,
define $f(a)$ to be the infimum of the set of $c\in\R$ such that there
exists a symplectic embedding $\op{int}(E(a,1))\to B(c)$.  Note that
$\op{vol}(E(a,b))=ab/2$, so the volume constraint implies that
$f(a)\ge \sqrt{a}$.  By Theorem~\ref{thm:ellipsoid},
\[
f(a) = \inf\{c\in\R \mid \mc{N}(a,1)\le \mc{N}(c,c)\}
\]
McDuff-Schlenk \cite{ms} computed $f$ explicitly (without using
Theorem~\ref{thm:ellipsoid}) and found in particular that:
\begin{itemize}
\item
If $1\le a \le ((1+\sqrt{5})/2)^4$, then $f$ is piecewise linear.
\item
The interval $[((1+\sqrt{5})/2)^4,(17/6)^2]$ is partitioned into
finitely many intervals, on each of which either $f$ is linear or $f(a)=\sqrt{a}$.
\item
If $a\ge (17/6)^2$ then $f(a)=\sqrt{a}$.
\end{itemize}

The starting point for the proof of Theorem~\ref{thm:ellipsoid} is
that in \cite{m-e}, the ellipsoid embedding problem is reduced to an
instance of the {\em ball packing problem\/}: Given positive real
numbers $a_1,\ldots,a_m$ and $a$, when does there exist a symplectic
embedding $\coprod_{i=1}^m\op{int}(B(a_i))\to B(a)$?  (Here and below,
all of our balls are four dimensional.)  It turns out that the answer
to this problem has been understood in various forms since the 1990's.
The form that is the most relevant for our discussion is the
following:

\begin{theorem}
\label{thm:packing}
There exists a symplectic embedding
\[
\coprod_{i=1}^m\op{int}(B(a_i))\to B(a)
\]
if and only if
\begin{equation}
\label{eqn:PO}
\sum_{i=1}^md_ia_i\le da
\end{equation}
whenever $(d_1,\ldots,d_m,d)$ are
nonnegative integers such that
\begin{equation}
\label{eqn:dsum}
\sum_{i=1}^m(d_i^2+d_i) \le d^2+3d.
\end{equation}
\end{theorem}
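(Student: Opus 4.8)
The plan is to convert the packing problem into the question of which cohomology classes on a blow-up of $\CP^2$ are represented by symplectic forms, and then to apply the theory of pseudoholomorphic curves together with Seiberg--Witten theory.

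\emph{Step 1: reduction to a blow-up.} Following McDuff--Polterovich, a symplectic embedding $\coprod_{i=1}^m \op{int}(B(a_i)) \to B(a)$ exists if and only if the $m$-fold symplectic blow-up $X_m = \CP^2 \# m\,\CPbar{}^2$ admits a symplectic form in the class $\omega = aH - \sum_{i=1}^m a_iE_i \in H^2(X_m;\R)$, where $H$ is Poincar\'e dual to a line and the $E_i$ are dual to the exceptional divisors. One replaces each embedded ball of capacity $a_i$ by a blow-up of exceptional size $a_i$, using that $B(a)$ agrees with $\CP^2$, rescaled so that a line has area $a$, away from the balls. Writing $K = -3H + \sum_i E_i$ for the (Poincar\'e dual of the) canonical class and $\mathbf d = dH - \sum_i d_iE_i$, one computes $\mathbf d\cdot\mathbf d - K\cdot\mathbf d = (d^2+3d) - \sum_i(d_i^2+d_i)$ and $\omega\cdot\mathbf d = da - \sum_i d_ia_i$. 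Thus \eqref{eqn:dsum} is exactly the condition $\mathbf d^2 \ge K\cdot\mathbf d$ and \eqref{eqn:PO} is the condition $\omega\cdot\mathbf d \ge 0$, so the theorem asserts that $\omega$ is symplectic if and only if $\omega\cdot\mathbf d \ge 0$ for every such $\mathbf d$ with nonnegative entries.

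\emph{Step 2: necessity.} The quantity $\frac12(\mathbf d^2 - K\cdot\mathbf d)$ is the expected complex dimension of the space of $J$-holomorphic curves in the class $\mathbf d$; when it is nonnegative and $d \ge 1$, the nonvanishing of the Gromov invariants of these rational surfaces (equivalently, by Taubes, their Seiberg--Witten invariants) produces, for a generic $\omega$-compatible $J$, a $J$-holomorphic subvariety $C$ representing $\mathbf d$. Positivity of symplectic area then gives $\omega\cdot\mathbf d = \int_C\omega \ge 0$, which is \eqref{eqn:PO}; the cases $d=0$ are immediate from \eqref{eqn:dsum}. The restriction to nonnegative $d$ is essential here, since it is what forces the class to be effective.

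\emph{Step 3: sufficiency.} This is the crux, and it rests on the description of the symplectic cone of $X_m$ due to Li--Liu and McDuff: a class $\alpha$ with $\alpha\cdot H > 0$ lies in the symplectic cone precisely when $\alpha^2 > 0$ and $\alpha\cdot\mc{E} > 0$ for every exceptional class $\mc{E}$, i.e.\ every class with $\mc{E}^2 = K\cdot\mc{E} = -1$ that is represented by a symplectically embedded sphere; Taubes' equivalence $\SW = \op{Gr}$ is what guarantees that these embedded spheres exist. Every exceptional class satisfies $\mc{E}^2 - K\cdot\mc{E} = 0$, so the exceptional classes of positive degree occur among the $\mathbf d$ in the hypotheses, while those of degree zero, the $E_i$, give the automatic constraints $a_i > 0$; a Cremona-reduction argument shows that these suffice to control all exceptional classes. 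Thus \eqref{eqn:PO}--\eqref{eqn:dsum} force $\omega\cdot\mc{E} \ge 0$ for every $\mc{E}$, and they likewise force the volume inequality $\omega^2 \ge 0$ upon applying \eqref{eqn:PO} to suitable higher-degree classes. After an arbitrarily small rescaling, which converts these into the strict inequalities appropriate to \emph{open} balls, $\omega$ lies in the symplectic cone. The required form is then produced concretely by the inflation technique of Lalonde--McDuff: one inflates a reference symplectic form along the embedded exceptional spheres supplied above to move its cohomology class to $\omega$, and Step 1 converts this into the desired packing.

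\emph{Main obstacle.} The principal difficulty is the gauge-theoretic input to Step 3 -- simultaneously the identification of the symplectic cone and the existence of embedded symplectic spheres in all exceptional classes -- together with the passage, via inflation, from a purely cohomological condition to an actual symplectic form. Both rest on Taubes' identification of the Seiberg--Witten and Gromov invariants, which is exactly what upgrades the arithmetic conditions \eqref{eqn:PO}--\eqref{eqn:dsum} from necessary to sufficient.
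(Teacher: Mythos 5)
Your overall route is the same as the paper's (Proposition~\ref{prop:tfae}): reduce to the existence of a symplectic form on $\CP^2\#m\CPbar^2$ in the class $aL-\sum_ia_iE_i$, get necessity from Taubes's SW$\Rightarrow$Gr together with wall crossing, and get sufficiency from the Li--Liu description of the symplectic cone plus McDuff's inflation. But there is a genuine gap in your Step 1, and it propagates into Steps 2 and 3: the equivalence you assert is false as stated. The theorem concerns embeddings of \emph{open} balls into the \emph{closed} ball, with non-strict inequalities, and such a packing can exist when the class $aL-\sum_ia_iE_i$ lies on the \emph{boundary} of the symplectic cone, where no symplectic form in that class can exist. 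The simplest counterexample is $m=1$, $a_1=a$: the inclusion $\op{int}(B(a))\to B(a)$ is a packing and the hypotheses \eqref{eqn:PO}--\eqref{eqn:dsum} hold, yet $(aL-aE_1)^2=0$, so this class is not symplectic. This boundary case is not peripheral; it is exactly where volume-filling packings live, e.g.\ Biran's result that $\nu(m)=1$ for $m\ge 9$. What the blow-up correspondence actually gives is: the \emph{closed} balls $\coprod_iB(a_i)$ embed in $\op{int}(B(a))$ if and only if the \emph{strict} inequalities hold. To get from there to the stated theorem one needs two more arguments. For necessity the fix is routine: embed $B(\lambda a_i)\subset\op{int}(B(a_i))$ for $\lambda<1$, derive strict inequalities, and let $\lambda\to 1$. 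For sufficiency, however, the step you compress into ``after an arbitrarily small rescaling'' is a real one: the hypotheses give a packing by the closed balls $B(\lambda a_i)$ for \emph{each} $\lambda<1$, but these are a priori unrelated embeddings, and one cannot simply take their union to obtain an embedding of $\coprod_i\op{int}(B(a_i))$. The paper's Lemma~\ref{lem:technical} nests them into an increasing sequence and takes a direct limit, and the nesting relies on McDuff's nontrivial theorem that any two packings of $\op{int}(B(a))$ by the same closed balls are equivalent under a symplectomorphism of $\op{int}(B(a))$. Without this input your argument does not produce the embedding in the equality (volume-filling) cases, which are precisely the cases of greatest interest.

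A second, smaller imprecision occurs in Step 3: you claim the positive-degree exceptional classes ``occur among the $\mathbf d$ in the hypotheses,'' but the hypotheses quantify only over \emph{nonnegative} tuples, while an exceptional class $dL-\sum_id_iE_i$ may a priori have some $d_i<0$; the appeal to Cremona reduction is vaguer and heavier than what is needed. The clean fix, which is what the paper does, is elementary: for an exceptional class, $A^2=-1$ and the adjunction relation $K\cdot A=-1$ give $\sum_i(d_i^2+d_i)=d^2+3d$ exactly, with $d\ge 0$; replacing every negative $d_i$ by $0$ preserves \eqref{eqn:dsum} and only increases $\sum_id_ia_i$, so the inequalities for nonnegative tuples do control the pairing of $\omega$ with every exceptional class.
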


For example, consider the special case where all of the $a_i$'s are
equal, and say $a=1$.  Define $\nu(m)$ to be the supremum, over all
symplectic embeddings of the disjoint union of $m$ equal balls into
$B(1)$, of the fraction of the volume of $B(1)$ that is filled.

\begin{proposition}
\begin{description}
\item{(a)} (McDuff-Polterovich \cite{mp})
The first $9$ values of $\nu$ are $1, 1/2, 3/4, 1, 20/25, 24/25, 63/64,
288/289,1$.  
\item{(b)} (Biran \cite{b1}) $\nu(m)=1$ for all $m\ge 9$.
\end{description}
\end{proposition}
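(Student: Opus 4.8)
The plan is to recast everything through the packing criterion of Theorem~\ref{thm:packing} and reduce the computation of $\nu(m)$ to a purely arithmetic optimization. Let $B(a)$ denote the common size of the $m$ balls. Since $\op{vol}(B(a))=a^2/2$ and $\op{vol}(B(1))=1/2$, the fraction of $B(1)$ filled by $m$ disjoint copies of $B(a)$ is exactly $ma^2$; hence $\nu(m)=m\,\alpha(m)^2$, where $\alpha(m)$ is the supremum of those $a$ for which $\coprod_{i=1}^m\op{int}(B(a))$ embeds symplectically into $B(1)$. Applying Theorem~\ref{thm:packing} with $a_1=\cdots=a_m=a$ and the target ball equal to $B(1)$, such an embedding exists precisely when $a\sum_{i=1}^md_i\le d$ for all nonnegative integers $(d_1,\dots,d_m,d)$ obeying \eqref{eqn:dsum}. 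Consequently
\[
\alpha(m)=\min_{(d_1,\dots,d_m,d)}\frac{d}{\sum_{i=1}^md_i},
\]
the minimum being over tuples in $\N^{m+1}$ with $\sum_{i=1}^m d_i\ge 1$ and $\sum_{i=1}^m(d_i^2+d_i)\le d^2+3d$, and the entire Proposition reduces to minimizing this ratio.

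For part~(b) I would bound the ratio from below using the Cauchy--Schwarz inequality. Put $S=\sum_i d_i$ and $Q=\sum_i d_i^2$, so that $Q\ge S^2/m$ and \eqref{eqn:dsum} yields $S^2/m+S\le Q+S\le d^2+3d$. If $m\ge 9$ and one had $S>\sqrt{m}\,d$, then $S^2/m>d^2$ and also $S>\sqrt{m}\,d\ge 3d$, forcing $S^2/m+S>d^2+3d$, a contradiction. Hence $S\le\sqrt{m}\,d$ for every admissible tuple, so $\alpha(m)\ge 1/\sqrt{m}$; since the volume constraint gives $\alpha(m)\le 1/\sqrt{m}$, we conclude $\alpha(m)=1/\sqrt{m}$ and $\nu(m)=1$ for all $m\ge 9$, which is~(b).

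For part~(a) I would, for each $m\in\{1,\dots,9\}$, exhibit an explicit tuple achieving the asserted ratio and then prove its optimality. Writing $1^k$ and $2^k$ for repeated entries, the tuples $(1,1,1,1,1;2)$, $(2,1^6;3)$, and $(3,2^7;6)$ satisfy \eqref{eqn:dsum} with equality and give $\alpha(5)\le 2/5$, $\alpha(7)\le 3/8$, $\alpha(8)\le 6/17$; substituting into $\nu(m)=m\,\alpha(m)^2$ reproduces $20/25$, $63/64$, $288/289$, while the cases $m\in\{1,2,3,4,6,9\}$ use the simpler tuples $(1;1)$, $(1,1;1)$, $(1,1,0;1)$, $(1^4;2)$, $(1^5,0;2)$, $(1^9;3)$. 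The real content is the matching lower bound, namely that no tuple produces a strictly smaller ratio. The Cauchy--Schwarz estimate of part~(b) still constrains $S/d$ and, because its bound decreases in $d$ toward $\sqrt m<1/\alpha(m)$, it bounds $d$ from above once the target ratio is fixed, leaving only finitely many tuples with small $d$; among those one may assume the $d_i$ as equal as possible, since for fixed $S$ this minimizes $Q$ and so gives the best chance of satisfying \eqref{eqn:dsum}.

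I expect the optimality step for $m\le 8$ to be the main obstacle. In this range Cauchy--Schwarz is no longer sharp, and the extremal tuples are ``sporadic''---they reflect the exceptional spheres in blow-ups of $\CP^2$ that underlie Theorem~\ref{thm:packing}, which is why the values $20/25,24/25,63/64,288/289$ look irregular before stabilizing at $1$. The case $m=8$ is especially delicate, since there $1/\alpha(8)=17/6$ barely exceeds $\sqrt 8$, so the upper bound on $d$ is large and the finite search must be carried out carefully to confirm that $(3,2^7;6)$ is genuinely optimal rather than merely the best among the smallest candidates.
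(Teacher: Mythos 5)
Your reduction $\nu(m)=m\,\alpha(m)^2$, where $\alpha(m)=\inf\{d/\sum_i d_i\}$ over nonnegative integer tuples satisfying \eqref{eqn:dsum}, is correct given Theorem~\ref{thm:packing} (one nitpick: it should be an infimum rather than a minimum, since for non-square $m\ge 10$ it is not attained). On the two halves that the paper actually proves, you reproduce its argument almost verbatim: your upper-bound tuples for (a) are exactly the paper's $(1,1,1)$, $(1,1,0,1)$, $(1,1,1,1,1,2)$, $(1,1,1,1,1,0,2)$, $(2,1,1,1,1,1,1,3)$, $(3,2,2,2,2,2,2,2,6)$, and your proof of (b) is the paper's Cauchy--Schwarz argument run as a single contradiction rather than the paper's two cases (if $\sum_i d_i^2\le d^2$, Cauchy--Schwarz gives $\sum_i d_i\le \sqrt m\,d$ directly; if $\sum_i d_i^2\ge d^2$, use $1/\sqrt m\le 1/3$ together with \eqref{eqn:dsum}). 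Both versions use identical ingredients and your version is correct.

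The gap is the sharpness half of (a): you establish only $\nu(m)\le$ the stated values, and the reverse inequalities are left as an unexecuted finite search, so as written the proposal does not prove the Proposition. In fairness, the paper does not prove this step either --- it cites McDuff--Polterovich \cite{mp} --- so what you are missing is precisely what the paper outsources. Your proposed route for filling it is legitimate and more self-contained than a citation: because Theorem~\ref{thm:packing} is an equivalence, sharpness is exactly the arithmetic claim that no admissible tuple has $\sum_i d_i/d$ larger than the stated ratios, and both of your reductions are valid --- replacing $(d_1,\dots,d_m)$ by the as-equal-as-possible partition of $S=\sum_i d_i$ minimizes $\sum_i d_i^2$ for fixed $S$ and $d$, hence loses no feasible $S$, and Cauchy--Schwarz caps the range of $d$ that must be examined. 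You are also right that $m=8$ is the painful case: with target ratio $17/6$, the bound $S^2/8+S\le d^2+3d$ implies $S/d\le 17/6$ exactly when $d\ge 48$, so the balanced-tuple search must be run over all $d\le 47$. That computation (or the citation to \cite{mp}) must actually be supplied before part (a) is complete; until then your proof covers (b) fully but only half of (a).
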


\begin{proof}
The upper bounds on $\nu(m)$ for $m=2,3,5,6,7,8$ follow
from Theorem~\ref{thm:packing} by taking $(d_1,\ldots,d_m,d)$ to be
$(1,1,1)$, $(1,1,0,1)$, $(1,1,1,1,1,2)$, $(1,1,1,1,1,0,2)$,
$(2,1,1,1,1,1,1,3)$, and $(3,2,2,2,2,2,2,2,6)$ respectively.  For the proof that
these upper bounds are sharp see \cite{mp}.

To prove (b), by Theorem~\ref{thm:packing}, it is enough to show that
if $(d_1,\ldots,d_m,d)$ satisfy \eqref{eqn:dsum} then
$(1/\sqrt{m})\sum_id_i\le d$.  To do so, consider two cases.  First,
if $\sum_id_i^2\le d^2$ then the Cauchy-Schwarz inequality gives
$\sum_{i=1}^m d_i \le d\sqrt{m}$.  Second, if $\sum_id_i^2\ge d^2$
then
\[
\frac{1}{\sqrt{m}}\sum_{i=1}^md_i \le \frac{1}{3}\sum_{i=1}^md_i \le d
+ \frac{1}{3}\left(d^2-\sum_{i=1}^m d_i^2\right) \le d.
\]
\end{proof}

\noindent{\bf Remark.}
Traynor \cite{traynor} gives an explicit construction of a maximal
symplectic packing of $B(1)$ by $m$ equal open balls when $m$ is a
perfect square or when $m\le 6$, compare Proposition~\ref{prop:me}
below and see \cite{schlenk} for an extensive discussion.  Explicit
constructions for $m=7,8$ are given by Wieck \cite{w}. However
explicit maximal packings are not known in general, and the proof of
Theorem~\ref{thm:packing} is rather indirect.

\section*{Axioms for ECH capacities}

We now explain how the ``only if'' parts of
Theorems~\ref{thm:ellipsoid} and \ref{thm:packing} can be recovered
from the general theory of ``ECH capacities''.

In general, a {\em symplectic capacity\/} is a function $c$, defined
on some class of symplectic manifolds, with values in $[0,\infty]$,
with the following properties:
\begin{itemize}
\item (Monotonicity) If there exists a symplectic embedding
  $(X_0,\omega_0)\to (X_1,\omega_1)$, then $c(X_0,\omega_0)\le
  c(X_1,\omega_1)$.
\item
(Conformality) If $\alpha$ is a nonzero real number, then
$c(X,\alpha\omega)=|\alpha|c(X,\omega)$.
\end{itemize}
See \cite{chls} for a review of
many symplectic capacities.

In \cite{qech} a new sequence of symplectic capacities was introduced
in four dimensions, called {\em ECH capacities\/}.  If $(X,\omega)$ is
a symplectic four-manifold (not necessarily closed or connected), its
ECH capacities are a sequence of numbers
\begin{equation}
\label{eqn:capacitiesIncreasing}
0 = c_0(X,\omega) < c_1(X,\omega) \le c_2(X,\omega) \le \cdots \le
\infty.
\end{equation}
We denote the entire sequence by
$c_\bullet(X,\omega)=(c_k(X,\omega))_{k\ge 0}$.  The following are
some basic properties of the ECH capacities:

\begin{theorem}[\cite{qech}]
\label{thm:qech}
The ECH capacities satisfy the following axioms:
\begin{itemize}
\item (Monotonicity)
$c_k$ is monotone for each $k$.
\item (Conformality)
$c_k$ is conformal for each $k$.
\item
(Ellipsoid)
$c_\bullet(E(a,b))=\mc{N}(a,b)$.
\item
(Disjoint Union)
\[
c_k\left(\coprod_{i=1}^m(X_i,\omega_i)\right) =
\max\left\{\sum_{i=1}^mc_{k_i}(X_i,\omega_i) \;\bigg|\; \sum_{i=1}^mk_i=k\right\}.
\]
\end{itemize}
\end{theorem}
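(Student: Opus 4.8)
The plan is to argue directly from the definition of the ECH capacities in \cite{qech} in terms of filtered embedded contact homology. Recall that to a nondegenerate contact three-manifold $(Y,\lambda)$ one associates the embedded contact homology $ECH_*(Y,\lambda)$, generated by finite sets of Reeb orbits and graded by the ECH index, equipped with a degree $-2$ operator $U$ and a distinguished contact invariant represented by the empty orbit set. The symplectic action $\mc{A}(\alpha)=\int_\alpha\lambda$ defines an increasing filtration, giving filtered groups $ECH^L_*(Y,\lambda)$ together with inclusion-induced maps $ECH^L_*\to ECH_*$. For a domain $X$ with $\partial X=Y$ there is a distinguished sequence of classes $\zeta_0,\zeta_1,\ldots$ with $\zeta_0$ the contact invariant and $U\zeta_k=\zeta_{k-1}$, and one sets $c_k(X,\omega)=\inf\{L : \zeta_k\in\op{im}(ECH^L_*\to ECH_*)\}$, the value for a general symplectic manifold being obtained by a supremum over suitable embedded subdomains. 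I would verify the four axioms in increasing order of difficulty, the first three being formal consequences of this definition together with one explicit computation.

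Conformality is immediate: rescaling $\omega$ by $\alpha$ rescales $\lambda$, hence every Reeb period, hence the entire action filtration, by $|\alpha|$, so each threshold $c_k$ scales by $|\alpha|$. For the Ellipsoid axiom I would compute $ECH_*(\partial E(a,b))$ directly: after a small nondegenerate perturbation the only Reeb orbits are the two coordinate circles and their iterates, the generators are orbit sets indexed by pairs $(m,n)\in\N^2$ of total action $am+bn$, and an index computation shows that the generator occupying grading $2k$ is the one of $(k+1)$-st smallest action. Hence $c_k(E(a,b))$ equals the $(k+1)$-st smallest value of $am+bn$, which is the $k$-th entry of $\mc{N}(a,b)$; the work here is combinatorial index bookkeeping rather than analysis.

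For the Disjoint Union axiom I would invoke the Künneth principle $ECH_*\!\left(\coprod_i Y_i\right)\cong\bigotimes_i ECH_*(Y_i)$, under which gradings and actions add and $U$ acts as a derivation. The distinguished grading-$2k$ class is then supported on every monomial $\bigotimes_i\zeta_{k_i}$ with $\sum_i k_i=k$; since these monomials are represented by distinct orbit sets, a cycle representing the class lies in $ECH^L_*$ precisely when each such monomial does, and the smallest admissible $L$ is $\max\{\sum_i c_{k_i}(X_i,\omega_i) : \sum_i k_i=k\}$, exactly the asserted formula.

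The genuine obstacle is Monotonicity, where the depth of the theory enters. A symplectic embedding $(X_0,\omega_0)\to(X_1,\omega_1)$ of domains exhibits $X_1\setminus X_0$ as an exact symplectic cobordism with positive boundary $\partial X_1$ and negative boundary $\partial X_0$, and the axiom reduces to producing an induced map $ECH_*(\partial X_1)\to ECH_*(\partial X_0)$ that commutes with $U$, carries the distinguished class for $\partial X_1$ to that for $\partial X_0$, and respects the action filtration in the sense that it sends $\op{im}(ECH^L_*(\partial X_1))$ into $\op{im}(ECH^L_*(\partial X_0))$; chasing the definition of $c_k$ through such a map then yields $c_k(X_0,\omega_0)\le c_k(X_1,\omega_1)$, with the general case following from the definition. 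The difficulty is that these cobordism maps cannot be defined by naively counting holomorphic curves in the cobordism; instead one constructs them via Taubes's isomorphism between ECH and Seiberg--Witten Floer cohomology and the functoriality of the latter, and the essential and hard point is that the resulting maps respect the symplectic action filtration. I expect this filtered functoriality, due to Hutchings--Taubes, to absorb essentially all of the analytic content of the theorem.
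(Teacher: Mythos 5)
Your proposal is correct and follows essentially the same route as the paper: Conformality directly from the definition, Ellipsoid and Disjoint Union by the direct ECH computations carried out in \cite{qech}, and Monotonicity---rightly identified as carrying all the analytic weight---via action-filtered cobordism maps that commute with the $U$ maps and send $[\emptyset]$ to $[\emptyset]$, constructed by passing to Seiberg--Witten theory exactly as in the paper's Theorem~\ref{thm:cob}. The only imprecisions are minor: the actual definition is existential, $c_k=\inf\left\{L \mid \exists\eta\in ECH^L(Y,\lambda,0):U^k\eta=[\emptyset]\right\}$, rather than via a canonical sequence $\zeta_k$ with $U\zeta_k=\zeta_{k-1}$ (which need not exist or be unique in general); a disconnected $3$-manifold carries one $U$ map per component, and the disconnected case of the definition uses all $k$-fold compositions of these rather than a single $U$ acting as a derivation; and the cobordism $(X_1\setminus\phi(\op{int}(X_0)),\omega_1)$ arising from a symplectic embedding is only \emph{weakly} exact in the paper's sense, which is all that Theorem~\ref{thm:cob} requires.
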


In particular, the ECH capacities give sharp obstructions to
symplectically embedding one ellipsoid into another, or a disjoint
union of balls into a ball:

\begin{corollary}
\label{cor:necessary}
\begin{description}
\item{(a)}
If there is a symplectic embedding $E(a,b)\to E(c,d)$, then
$\mc{N}(a,b)\le \mc{N}(c,d)$.
\item{(b)}
If there is a symplectic embedding $\coprod_{i=1}^mB(a_i)\to B(a)$,
then the inequalities \eqref{eqn:PO} hold.
\end{description}
\end{corollary}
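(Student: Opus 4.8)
The plan is to derive Corollary~\ref{cor:necessary} directly from Theorem~\ref{thm:qech}, using the monotonicity and conformality axioms together with the explicit ellipsoid and disjoint union computations. The key observation is that each ECH capacity $c_k$ is itself a symplectic capacity, so the existence of a symplectic embedding forces an inequality on $c_k$ for \emph{every} $k$ simultaneously.

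\medskip
\noindent
\emph{Proof of part (a).} Suppose there is a symplectic embedding $E(a,b)\to E(c,d)$. By the Monotonicity axiom, $c_k(E(a,b))\le c_k(E(c,d))$ for every $k\ge 0$. By the Ellipsoid axiom, $c_\bullet(E(a,b))=\mc{N}(a,b)$ and $c_\bullet(E(c,d))=\mc{N}(c,d)$, so this says exactly that the $k^{th}$ term of $\mc{N}(a,b)$ is at most the $k^{th}$ term of $\mc{N}(c,d)$ for all $k$. In the notation introduced before Theorem~\ref{thm:ellipsoid}, this is precisely the statement $\mc{N}(a,b)\le\mc{N}(c,d)$. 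This recovers the ``only if'' direction of Theorem~\ref{thm:ellipsoid}.

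\medskip
\noindent
\emph{Proof of part (b).} Now suppose there is a symplectic embedding $\coprod_{i=1}^m B(a_i)\to B(a)$. Fix nonnegative integers $(d_1,\ldots,d_m,d)$ satisfying \eqref{eqn:dsum}; the goal is to produce \eqref{eqn:PO}. First I would choose an index $k$ adapted to these data. Applying Monotonicity at level $k$ and then the Disjoint Union axiom to the left-hand side gives, for any decomposition $k=\sum_i k_i$,
\[
\sum_{i=1}^m c_{k_i}(B(a_i)) \le c_k\left(\coprod_{i=1}^m B(a_i)\right) \le c_k(B(a)).
\]
Since $B(a_i)=E(a_i,a_i)$, the Ellipsoid axiom identifies $c_\bullet(B(a_i))=\mc{N}(a_i,a_i)$, and by Conformality $c_k(B(a_i))=a_i\,c_k(B(1))$ and $c_k(B(a))=a\,c_k(B(1))$. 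The sequence $\mc{N}(1,1)=c_\bullet(B(1))$ consists of the integers $m+n$ with $m,n\in\N$ listed with multiplicity; the entry $c_N(B(1))$ equals $d$ precisely when $N$ lies in the range of indices at which the value $d$ appears, and the number of pairs $(m,n)$ with $m+n\le d-1$ is $\binom{d+1}{2}=(d^2+d)/2$. The heart of the argument is the bookkeeping: choosing $k_i$ so that $c_{k_i}(B(1))=d_i$ and $k$ so that $c_k(B(1))=d$, one checks that a valid decomposition $k=\sum_i k_i$ exists exactly when the index counts fit inside one another, and translating these index counts through the formula $(d^2+d)/2$ yields the constraint \eqref{eqn:dsum}. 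With such a choice made, the displayed inequality becomes $\sum_i d_i a_i\le d\,a$, which is \eqref{eqn:PO}.

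\medskip
The main obstacle is the combinatorial translation in part (b): one must verify that the index-counting condition guaranteeing a valid splitting $k=\sum_i k_i$ with prescribed capacity values is equivalent to the quadratic inequality \eqref{eqn:dsum}. This is where the specific value $d^2+3d$ (rather than $d^2+d$) emerges, since the relevant index of $B(a)$ at which the value $d$ is attained sits one step past the threshold $\binom{d+1}{2}$; I would track this off-by-one carefully. Everything else---Monotonicity, Conformality, and the identification of capacities with $\mc{N}$---is a direct substitution.
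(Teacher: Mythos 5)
Your part (a) is exactly the paper's proof. In part (b) your overall strategy (Disjoint Union plus Monotonicity plus the Ellipsoid computation, with a clever choice of indices) is also the paper's, but the bookkeeping step you defer---and the way you set it up---contains a genuine flaw. You ask for indices $k_i$ with $c_{k_i}(B(1))=d_i$ \emph{and} $k=\sum_i k_i$ with $c_k(B(1))=d$, and you claim such a decomposition exists ``exactly when'' the constraint \eqref{eqn:dsum} holds. That equivalence is false. The condition $c_{k_i}(B(1))=d_i$ forces $k_i\in[(d_i^2+d_i)/2,(d_i^2+3d_i)/2]$, and $c_k(B(1))=d$ forces $k\in[(d^2+d)/2,(d^2+3d)/2]$; so your decomposition can exist only if $\sum_i(d_i^2+3d_i)/2\ge (d^2+d)/2$, which is \emph{not} implied by \eqref{eqn:dsum}. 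For instance, $m=2$, $d_1=d_2=1$, $d=5$ satisfies \eqref{eqn:dsum}, but then $\sum_i k_i\le 4 < 15 = (d^2+d)/2$, so no index $k$ with $c_k(B(1))=5$ admits your decomposition---yet \eqref{eqn:PO} still has to be proved for this tuple.

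The fix, which is what the paper does, is to drop the requirement that $c_k(B(a))$ equal $da$ exactly, and use instead that the sequence of capacities is nondecreasing in $k$, as recorded in \eqref{eqn:capacitiesIncreasing}. Take $k_i = (d_i^2+d_i)/2$ (the smallest index at which $c_{k_i}(B(a_i)) = d_ia_i$), set $k=\sum_i k_i$, and let $k'=(d^2+3d)/2$ (the largest index at which $c_{k'}(B(a))=da$). Then \eqref{eqn:dsum} says precisely that $k\le k'$, and the chain
\[
\sum_{i=1}^m d_ia_i=\sum_{i=1}^m c_{k_i}(B(a_i))\le c_k\left(\coprod_{i=1}^m B(a_i)\right)\le c_k(B(a))\le c_{k'}(B(a))=da
\]
gives \eqref{eqn:PO}: the first inequality is the Disjoint Union axiom, the second is Monotonicity, and the third---the step your formulation is missing---is the monotonicity of the capacity sequence in the index $k$. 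Note also that Conformality is not needed at all; the Ellipsoid axiom directly gives the values of $c_\bullet(B(a_i))$ and $c_\bullet(B(a))$.
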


\begin{proof}
(a) This follows immediately from the Monotonicity and Ellipsoid
axioms in Theorem~\ref{thm:qech}.

(b) 
First note that by the Ellipsoid axiom in Theorem~\ref{thm:qech},
we have $c_k(B(a))=d$, where $d$ is the unique nonnegative integer such that
\[
\frac{d^2+d}{2}\le k \le \frac{d^2+3d}{2}.
\]
Now suppose there is a symplectic embedding $\coprod_{i=1}^mB(a_i)\to
B(a)$, and suppose $(d_1,\ldots,d_m,d)$ are nonnegative integers
satisfying \eqref{eqn:dsum}.  Let $k_i=(d_i^2+d_i)/2$ and
$k=\sum_{i=1}^mk_i$ and
$k'=(d^2+3d)/2$.  By hypothesis, $k\le k'$.  We then have
\begin{gather*}
\sum_{i=1}^md_ia_i=\sum_{i=1}^mc_{k_i}(B(a_i)) \le
c_k\left(\coprod_{i=1}^mB(a_i)\right) \\
\le c_k(B(a)) \le c_{k'}(B(a)) =
da,
\end{gather*}
where the first inequality holds by the Disjoint Union axiom, the
second by Monotonicity, and the third by \eqref{eqn:capacitiesIncreasing}.
\end{proof}

\section*{Ball packing}

We now review the proof of Theorem~\ref{thm:packing}, and related
criteria for the existence of a symplectic embedding
$\coprod_{i=1}^m\op{int}(B(a_i))\to B(a)$.  By scaling, we may assume
that $a=1$.

The first step is to show that the existence of a ball packing is
equivalent to the existence of a certain symplectic form on
$\CP^2\#m\CPbar^2$.  There is a standard symplectic form $\omega$ on
$\CP^2$ such that $\langle L,\omega\rangle=1$, where $L$ denotes the
homology class of a line.  With this symplectic form,
$\op{vol}(\CP^2)=1/2$, and there is a symplectic embedding
$\op{int}(B(1))\to\CP^2$.  Now suppose there exists a symplectic
embedding $\coprod_{i=1}^mB(a_i)\to \op{int}(B(1))$.  We then have a
symplectic embedding $\coprod_{i=1}^mB(a_i)\to\CP^2$.  We can now
perform the ``symplectic blowup'' along (the image of) each of the
balls $B(a_i)$.  This amounts to removing the interior of $B(a_i)$,
and then collapsing the fibers of the Hopf fibration on $\partial
B(a_i)$ to points, so that $\partial B(a_i)$ is collapsed to the
$i^{th}$ exceptional divisor.  The result is a symplectic form
$\omega$ on $\CP^2\#m\CPbar^2$ whose cohomology class is given by
\begin{equation}
\label{eqn:blowup}
\op{PD}[\omega] = L - \sum_{i=1}^ma_iE_i,
\end{equation}
where $E_i$ denotes the homology class of the $i^{th}$ exceptional
divisor, and $\op{PD}$ denotes Poincar\'{e} duality.  Also the
canonical class for this symplectic form (namely $-c_1$ of the tangent
bundle as defined using an $\omega$-compatible almost complex
structure) is given by
\begin{equation}
\label{eqn:K}
\op{PD}(K) = -3L + \sum_{i=1}^mE_i.
\end{equation}

To proceed, define $\mc{E}_m$ to be the set of classes in
$H_2(\CP^2\#m\CPbar^2)$ that have square $-1$ and can be represented
by a smoothly embedded sphere that is symplectic with respect to some
symplectic form $\omega$ obtained from blowing up $\CP^2$.  Elements
of $\mc{E}_m$ are called ``exceptional classes''.  One can show that
the set $\mc{E}_m$ does not depend on the choice of $\omega$ as above.
In fact, Li-Li \cite{lili} used Seiberg-Witten theory to show that
$\mc{E}_m$ consists of the set of classes $A$ such that $A^2=A\cdot
K=-1$ and $A$ is representable by a {\em smoothly\/} embedded sphere.

\begin{proposition}
\label{prop:tfae}
  Let $a_1,\ldots,a_m>0$.  Then the following are equivalent:
\begin{enumerate}
\item[(a)]
There exists a symplectic embedding
\[
\coprod_{i=1}^m B(a_i)\to
\op{int}(B(1)).
\]
\item[(b)]
There exists a symplectic form $\omega$
on $\CP^2\#m\CPbar^2$ satisfying \eqref{eqn:blowup} and \eqref{eqn:K}.
\item[(c)]
$\sum_{i=1}^m a_i^2<1$, and $\sum_{i=1}^md_ia_i<d$ whenever
\[
dL-\sum_{i=1}^md_iE_i\in\mc{E}_m.
\]
\item[(d)]
$\sum_{i=1}^m d_ia_i < d$ whenever
$(d_1,\ldots,d_m,d)$ are nonnegative integers, not all zero,
satisfying \eqref{eqn:dsum}.
\end{enumerate}
\end{proposition}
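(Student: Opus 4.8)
My plan is to prove the four equivalences by splitting them into two geometric steps, (a)$\Leftrightarrow$(b) and (b)$\Leftrightarrow$(c), which use symplectic and gauge-theoretic input, and one arithmetic step, (c)$\Leftrightarrow$(d), which is pure lattice combinatorics. I work in $H_2(\CP^2\#m\CPbar^2)$ with basis $L,E_1,\dots,E_m$, so that $L^2=1$, $E_i^2=-1$, and all other intersection products vanish. Two computations drive everything. First, for a class $A=dL-\sum_i d_iE_i$ one has $A^2=d^2-\sum_i d_i^2$ and $A\cdot K=\sum_i d_i-3d$; hence $A\in\mc{E}_m$ forces $\sum_i d_i^2=d^2+1$ and $\sum_i d_i=3d-1$, so that \eqref{eqn:dsum} holds with equality, and more generally \eqref{eqn:dsum} is exactly the inequality $A\cdot K\le A^2$. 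Second, for $\op{PD}[\omega]=L-\sum_i a_iE_i$ one has $\op{PD}[\omega]^2=1-\sum_i a_i^2$ and $\op{PD}[\omega]\cdot A=d-\sum_i a_id_i$, so the two clauses of (c) are precisely the positivity of $\op{PD}[\omega]^2$ and the positivity of $\op{PD}[\omega]\cdot A$ over $A\in\mc{E}_m$.

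For (a)$\Leftrightarrow$(b), the forward implication is the symplectic blow-up already sketched before the proposition: composing the packing with the embedding $\op{int}(B(1))\to\CP^2$ and blowing up the image of each $B(a_i)$ produces a symplectic form with cohomology class \eqref{eqn:blowup} and canonical class \eqref{eqn:K}. For the converse I would run this in reverse following McDuff: given such an $\omega$, represent the classes $E_1,\dots,E_m$ by pairwise disjoint embedded $\omega$-symplectic spheres and blow them down, recovering $\CP^2$ with the Fubini--Study form together with disjoint symplectically embedded balls of the prescribed radii. The delicate point here is producing disjoint symplectic representatives of the $E_i$, which rests on $J$-holomorphic curve theory and positivity of intersections.

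The equivalence (b)$\Leftrightarrow$(c) is the analytic heart of the argument and is where I expect the main obstacle to lie. The implication (b)$\Rightarrow$(c) is comparatively soft: $\op{PD}[\omega]^2=\int_X\omega\wedge\omega>0$ immediately gives $\sum_i a_i^2<1$, while each $E\in\mc{E}_m$ is represented by an embedded $\omega$-symplectic sphere whose positive area equals $\op{PD}[\omega]\cdot E=d-\sum_i a_id_i$. The reverse implication (c)$\Rightarrow$(b) is the deep input: one must \emph{construct} a symplectic form in the class \eqref{eqn:blowup} with canonical class \eqref{eqn:K}, knowing only that this class has positive square and pairs positively with every class in $\mc{E}_m$. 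This is precisely the description of the symplectic cone of the rational surface $\CP^2\#m\CPbar^2$, which relies on Taubes's Seiberg--Witten theory together with the identification of $\mc{E}_m$ due to Li--Li \cite{lili}; it is the single step I would not attempt to reprove from scratch.

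Finally (c)$\Leftrightarrow$(d) is elementary but fiddly. For (d)$\Rightarrow$(c), every exceptional class with nonnegative coefficients satisfies \eqref{eqn:dsum} with equality, so the exceptional inequalities of (c) form a subfamily of those in (d), while exceptional classes with a negative coefficient impose no stronger constraint since the $a_i$ are positive; the volume clause $\sum_i a_i^2<1$ I would extract by a Dirichlet-type approximation, noting that if $\sum_i a_i^2>1$ then integer tuples with $d_i/d$ near the maximizing ray $a_i/\sqrt{\sum_j a_j^2}$ satisfy \eqref{eqn:dsum} yet force $\sum_i a_id_i\ge d$. For the converse (c)$\Rightarrow$(d) I would take an arbitrary nonzero nonnegative tuple with $A\cdot K\le A^2$ and reduce it by Cremona transformations---the reflections generating the Weyl-group action preserving $K$ and the intersection form---to an exceptional or manifestly positive class, using the volume constraint to keep $d-\sum_i a_id_i$ positive throughout. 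The genuinely subtle point, which I would flag rather than bury, is the boundary regime $\sum_i a_i^2=1$: there the open-versus-closed distinction between the embedding in (a) and the strict inequalities in (d) must be reconciled by a closure or limiting argument, and it is precisely here (for instance when $m$ is a perfect square) that the combinatorics is sharpest.
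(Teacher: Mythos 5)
Your architecture differs from the paper's in one essential way. The paper never proves (c)$\Rightarrow$(d) at all: it closes the loop as (c)$\Rightarrow$(b)$\Rightarrow$(a)$\Rightarrow$(d)$\Rightarrow$(c), establishing (a)$\Rightarrow$(d) by a second application of Seiberg--Witten wall crossing and Taubes's SW$\Rightarrow$Gr (a $J$-holomorphic curve in the class $dL-\sum_id_iE_i$ has positive $\omega$-area), or alternatively by ECH capacities. You instead prove the three equivalences pairwise, so you need (c)$\Rightarrow$(d), which you propose to do by Cremona reduction; that is precisely the algebraic alternative the paper attributes to McDuff \cite{m-h}, so this is a legitimate known route, though your one-sentence sketch of the reduction would need substantial work to be a proof. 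Your (a)$\Leftrightarrow$(b) and (b)$\Leftrightarrow$(c) agree in substance with the paper's: your blow-down of disjoint symplectic spheres in the classes $E_i$ is the McDuff--Polterovich argument, which the paper reaches instead through the deformation results of \cite{mp} and \cite{liliu}, and both you and the paper delegate (c)$\Rightarrow$(b) to the symplectic cone/inflation results resting on Taubes's theory and \cite{lili}.

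The genuine gap is the volume clause in (d)$\Rightarrow$(c), and it is worse than the ``delicate point'' you flagged: the implication is false as stated, so no closure or limiting argument can complete your proof (nor the paper's, which hides the identical issue behind an ``easy calculus exercise''). Your Dirichlet-type argument applies only when $\sum_ia_i^2>1$, hence yields only $\sum_ia_i^2\le 1$ from (d). At the boundary the implication genuinely fails: take $m=N^2$ with $N\ge 4$ and $a_i=1/N$ for all $i$. If nonnegative integers $(d_1,\ldots,d_m,d)$, not all zero, satisfy \eqref{eqn:dsum}, then $d\ge 1$; and if $\sum_id_i\ge Nd$, then Cauchy--Schwarz gives $\sum_id_i^2\ge \left(\sum_id_i\right)^2/N^2\ge d^2$, so $\sum_i(d_i^2+d_i)\ge d^2+Nd>d^2+3d$, contradicting \eqref{eqn:dsum}; hence every admissible tuple satisfies $\sum_id_ia_i=\frac{1}{N}\sum_id_i<d$ and (d) holds. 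Yet $\sum_ia_i^2=1$, so (c) fails, and (a) and (b) fail as well: the closed balls have total volume equal to that of $\op{int}(B(1))$, which no compact subset can fill, and the class $L-\sum_ia_iE_i$ has square zero, so it cannot be $\op{PD}[\omega]$ for any symplectic form. This is exactly the perfect-square case you mentioned (compare Traynor's full packings by \emph{open} balls). The statement can only be saved by adding $\sum_ia_i^2<1$ as a hypothesis to (d); with that amendment your pairwise scheme works, and nothing is lost in the application to Theorem~\ref{thm:packing}, since there one only invokes the proposition for rescaled weights $\lambda a_i$ with $\lambda<1$, where the non-strict bound $\sum_ia_i^2\le 1$ (which your argument does prove) already forces the strict inequality. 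So your instinct to flag the boundary regime was exactly right; the correct conclusion is that it is a counterexample to the stated implication, not a technicality to be smoothed over.
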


\begin{proof}
(a) $\Rightarrow$ (b) follows from the blowup construction described
above.

(b) $\Rightarrow$ (a): 
It is shown in \cite{mp} that if \eqref{eqn:blowup} holds and if
$\omega$ is homotopic through symplectic forms to a form obtained by
blowing up $\CP^2$ along small balls, then one can ``blow down'' to
obtain a ball packing.  And it is shown in \cite{liliu} that any two
symplectic forms on $\CP^2\#m\CPbar^2$ satisfying \eqref{eqn:K} are
homotopic through symplectic forms.

(b) $\Rightarrow$ (c) because $\omega^2>0$ and $\omega$ has positive
pairing with every exceptional class.

(c) $\Rightarrow$ (b) is proved in \cite{liliu}.  Actually, by
Lemma~\ref{lem:technical} below, it is enough to prove the slightly
weaker statement that if (c) holds then $(a_1,\ldots,a_m)$ is in the
closure of the set of tuples satisfying (b).  This last statement
follows from earlier work of McDuff \cite[Lem.\ 2.2]{m-di} and Biran
\cite[Thm.\ 3.2]{biran}.  The idea of the argument is as follows.
Without loss of generality, $a_1,\ldots,a_m$ are rational.  Write
$A=L-\sum_ia_iE_i$.  Let $\omega_0$ be a symplectic form on
$\CP^2\#m\CPbar^2$ obtained by blowing up $\CP^2$ along small balls.
We will see below that for some positive integer $n$ there exists a
connected, embedded, $\omega_0$-symplectic surface $C$ representing
the class $nA$.  Then, since $A\cdot A>0$ (by the first condition in
(c)), the ``inflation'' procedure in \cite[Lem.\ 1.1]{m-di} allows one
to deform $\omega_0$ in a neighborhood of $C$ to obtain a symplectic
form $\omega$ with cohomology class $[\omega]=\omega_0+r\op{PD}(A)$
for any $r>0$.  By taking $r$ large and scaling, this gives a
symplectic form whose cohomology class is arbitrarily close to
$\op{PD}(A)$.

To find a surface $C$ as above, choose a generic $\omega_0$-compatible
almost complex structure $J$.  By the wall crossing formula for
Seiberg-Witten invariants \cite{km1} and Taubes's
``SW$\Rightarrow$Gr'' theorem \cite{swgr}, if $\alpha\in
H_2(\CP^2\#m\CPbar^2)$ is any class with $\alpha^2-K\cdot\alpha\ge 0$
and $\omega_0\cdot(K-\alpha)<0$, then there exists a $J$-holomorphic
curve $C$ in the class $\alpha$.  This works for $\alpha=nA$ when $n$
is large.  It turns out that the resulting holomorphic curve $C$ is a
connected embedded symplectic surface as desired, unless it includes
an embedded sphere $\Sigma$ of self-intersection $-1$ (or a multiple
cover thereof) which does not intersect the rest of $C$.  In this last
case, $\Sigma$ would represent an exceptional class with
$A\cdot\Sigma<0$, contradicting the second condition in (c).

(d) $\Rightarrow$ (c): Assume that (d) holds.  The first part of (c)
follows by an easy calculus exercise, see \cite{qech}, and is also a
special case of a general relation between ECH capacities and
symplectic volume discussed at the end of this article.  To prove the
rest of (c), let $A=dL-\sum d_iE_i$ be an exceptional class.  Since
$A^2=-1$, we have $\sum d_i^2=d^2+1$.  Furthermore the adjunction
formula implies that $K\cdot A=-1$, so by \eqref{eqn:K} we have $\sum
d_i = 3d-1$.  Adding these two equations gives $\sum
(d_i^2+d_i)=d^2+3d$.  We must have $d\ge 0$, since $A$ is represented
by an embedded surface which is symplectic with respect to $\omega_0$.
If all of the integers $d_i$ are nonnegative as well, then (d) implies
that $\sum d_ia_i<d$ as desired.  If any of the integers $d_i$ are
negative, then replace them by $0$ and apply (d) to obtain an even
stronger inequality.

(a) $\Rightarrow$ (d): This follows by invoking the ECH capacities as
in Corollary~\ref{cor:necessary}.  One can also prove this without
using ECH capacities as follows.  Suppose there is a ball packing as
in (a), let $\omega$ be a symplectic form as in (b) and let $J$ be a
generic $\omega$-compatible almost complex structure.  Let
$(d_1,\ldots,d_m,d)$ be nonnegative integers (not all zero) satisfying
\eqref{eqn:dsum}, and write $\alpha=dL-\sum_id_iE_i$.  Then
$\alpha^2-K\cdot\alpha\ge 0$ by \eqref{eqn:dsum}, and
$\omega\cdot(K-\alpha)<0$ since $d_1,\ldots,d_m,d$ are nonnegative, so
as in the proof of (c) $\Rightarrow$ (b) there exists a
$J$-holomorphic curve $C$ representing the class $\alpha$.  Since this
must have positive symplectic area with respect to $\omega$, we obtain
the inequality \eqref{eqn:PO}.

As an alternative to the above paragraph, McDuff \cite{m-h} proves
that (c) $\Rightarrow$ (d) by an algebraic argument using the
explicit description of $\mc{E}_m$ in \cite{lili}.
\end{proof}

Theorem~\ref{thm:packing} now follows from Proposition~\ref{prop:tfae},
together with the following technical lemma:

\begin{lemma}
\label{lem:technical}
There is a symplectic embedding $\coprod_{i=1}^m\op{int}(B(a_i))\to
B(a)$ if there is a symplectic embedding $\coprod_{i=1}^mB(\lambda
a_i)\to \op{int}(B(a))$ for every $\lambda<1$.
\end{lemma}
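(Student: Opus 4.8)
The plan is to exhaust the open balls by nested closed balls and to build the desired map as an increasing union of symplectic embeddings that have been arranged to agree on overlaps. Fix an increasing sequence $\lambda_1<\lambda_2<\cdots$ with $\lambda_n\uparrow 1$, and write $K_n=\coprod_{i=1}^m B(\lambda_n a_i)$, so that $K_n\subset\op{int}(K_{n+1})$ and $\bigcup_n K_n=\coprod_{i=1}^m\op{int}(B(a_i))$. Applying the hypothesis with $\lambda=\lambda_{n+1}$ produces a symplectic embedding defined on a neighborhood of $K_n$; restricting, I obtain symplectic embeddings $\phi_n\colon K_n\to\op{int}(B(a))$ for every $n$. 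If these could be chosen so that $\phi_{n+1}|_{K_n}=\phi_n$, then $\Phi\eqdef\bigcup_n\phi_n$ would be a well-defined smooth map on $\coprod_i\op{int}(B(a_i))$; since each $\phi_n$ is an injective immersion which is symplectic, and the $K_n$ exhaust the source, $\Phi$ would be the required symplectic embedding into $\op{int}(B(a))\subset B(a)$.

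The content of the lemma is therefore to arrange the compatibility $\phi_{n+1}|_{K_n}=\phi_n$, which I would achieve by an inductive modification. Set $\Psi_1=\phi_1$, and suppose a symplectic embedding $\Psi_n\colon K_n\to\op{int}(B(a))$ has been constructed. Both $\Psi_n$ and $\phi_{n+1}|_{K_n}$ are symplectic embeddings of the \emph{same} disjoint union of closed balls $K_n$ into $\op{int}(B(a))$. Invoking the connectedness of the space of symplectic ball packings of a ball in dimension four (a consequence of McDuff's results, compare the proof of Proposition~\ref{prop:tfae} and \cite{mp,m-di}), these two embeddings are joined by a smooth family of symplectic embeddings $g_t\colon K_n\to\op{int}(B(a))$ with $g_0=\phi_{n+1}|_{K_n}$ and $g_1=\Psi_n$. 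By the symplectic isotopy extension theorem this family extends to a compactly supported symplectic isotopy $G_t$ of $\op{int}(B(a))$ with $G_0=\op{id}$ and $G_t\circ\phi_{n+1}|_{K_n}=g_t$. I then set $\Psi_{n+1}\eqdef G_1\circ\phi_{n+1}\colon K_{n+1}\to\op{int}(B(a))$, which is a symplectic embedding satisfying $\Psi_{n+1}|_{K_n}=g_1=\Psi_n$, and take $\Phi=\bigcup_n\Psi_n$.

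Finally I would verify that $\Phi$ is genuinely a symplectic embedding: it is well defined because the $\Psi_n$ agree on overlaps, it is injective because each $\Psi_n$ is and the $\Psi_n$ are consistent, and on each open set $\op{int}(K_n)$ it coincides with the embedding $\Psi_n$; being an injective local symplectomorphism that is an embedding on each member of an open cover of its source, $\Phi$ is therefore a symplectic embedding of $\coprod_i\op{int}(B(a_i))$ into $B(a)$ (indeed into $\op{int}(B(a))$). The main obstacle is the inductive gluing step: everything hinges on knowing that any two symplectic embeddings of the fixed packing $K_n$ into the open ball are symplectically isotopic, i.e.\ on the connectedness of the space of symplectic ball packings. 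This is exactly the nontrivial four-dimensional input (via $J$-holomorphic curves and the uniqueness of symplectic blowing up), and it is what allows us to bypass a naive $C^1$-compactness argument on the family $\{\phi_n\}$, which need not converge.
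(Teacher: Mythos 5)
Your proposal is correct and takes essentially the same approach as the paper: both arguments use McDuff's uniqueness result for ball packings \cite{m-di} to inductively adjust the embeddings of the nested compact unions $\coprod_i B(\lambda_n a_i)$ so that each is the restriction of the next, and then take the direct limit (union) of the resulting compatible sequence. The only cosmetic difference is that the paper cites \cite{m-di} directly for the statement that any two symplectic embeddings $\coprod_i B(a_i)\to \op{int}(B(a))$ are equivalent via an ambient symplectomorphism, whereas you unpack that same statement into connectedness of the space of packings plus the symplectic isotopy extension theorem.
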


\begin{proof}
It is shown in \cite{m-di} that any two symplectic
embeddings $\coprod_{i=1}^mB(a_i)\to \op{int}(B(a))$ are equivalent
via a symplectomorphism of $\op{int}(B(a))$.
Consequently, if there is a symplectic embedding
$\coprod_{i=1}^mB(\lambda a_i)\to
\op{int}(B(a))$ for every $\lambda<1$, then we can obtain a
sequence of symplectic embeddings
$\phi_n:\coprod_{i=1}^mB((1-1/n)a_i)\to \op{int}(B(a))$ such that
$\phi_n$ is the restriction of $\phi_{n+1}$.  The direct limit of the
maps $\phi_n$ then gives the desired symplectic embedding
$\coprod_{i=1}^m\op{int}(B(a_i))\to B(a)$.
\end{proof}

\section*{Ellipsoid embeddings}

We now explain McDuff's proof of Theorem~\ref{thm:ellipsoid} using
Theorem~\ref{thm:packing}.  By a continuity argument as in
Lemma~\ref{lem:technical}, we can assume without loss of generality
that $a/b$ and $c/d$ are rational.

If $a$ and $b$ are positive real numbers with $a/b$ rational, 
the {\em weight expansion\/} $W(a,b)$ is a finite list of real numbers
(possibly repeated) defined recursively as follows:
\begin{itemize}
\item
If $a<b$ then
$W(a,b)=(a)\cup W(a,b-a)$.
\item
$W(a,b)=W(b,a)$.
\item
$W(a,a)=(a)$.
\end{itemize}
For example, $W(5,3)=(3,2,1,1)$.  If $W(a,b)=(a_1,\ldots,a_m)$, write
$B(a,b)=\coprod_{i=1}^mB(a_i)$.  Ellipsoid embeddings are then
related to ball packings as follows:

\begin{proposition}[McDuff \cite{m-e}]
\label{prop:me}
  Suppose $a/b$ and $c/d$ are rational with $c<d$.  Then there is a
  symplectic embedding $\op{int}(E(a,b))\to E(c,d)$ if and only if
  there is a symplectic embedding
\begin{equation}
\label{eqn:emb0}
\op{int}\left(B(a,b) \sqcup B(d-c,d)\right) \to B(d).
\end{equation}
\end{proposition}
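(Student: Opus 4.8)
The plan is to turn both the domain ellipsoid $E(a,b)$ and the target ellipsoid $E(c,d)$ into disjoint unions of balls, by combining a geometric decomposition of the ball with the equivalence between an ellipsoid and its weight-expansion packing. Since $a/b$ and $c/d$ are rational, every weight expansion is finite, so all the objects $B(a,b)$, $B(d-c,d)$ are genuine finite ball packings. The argument is a chain of equivalences: first I convert the target, then I convert the domain(s).

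The geometric lemma I would establish first is that, for $0<c\le d$, the ball $B(d)$ decomposes symplectically as $E(c,d)$ together with a complementary copy of $E(d-c,d)$. Concretely, there is a volume-filling symplectic embedding $\op{int}(E(c,d)) \sqcup \op{int}(E(d-c,d)) \to B(d)$, and the complement of a standardly placed copy of $\op{int}(E(c,d))$ is symplectomorphic to $\op{int}(E(d-c,d))$. The volumes already match, since $cd/2 + (d-c)d/2 = d^2/2$, and the decomposition itself is read off from the moment-triangle pictures: the triangle of $E(c,d)$ (with legs $c,d$) and a complementary triangle of the same area as $E(d-c,d)$ tile the triangle of $B(d)=E(d,d)$, the two triangles being $AGL(2,\Z)$-equivalent to the standard ellipsoid triangles. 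In particular this lemma is symmetric under $c\leftrightarrow d-c$, which I will use below.

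Granting the lemma, I obtain the target conversion: for any domain $Z$, there is an embedding $\op{int}(Z)\to E(c,d)$ if and only if $\op{int}(Z)\sqcup\op{int}(E(d-c,d))\to B(d)$. The forward implication is immediate, since I can compose $Z\to E(c,d)$ with the decomposition embedding. The reverse implication is where the real difficulty lies: given a disjoint embedding of $Z$ and $E(d-c,d)$ into $B(d)$, I want to place $Z$ inside the complement of the $E(d-c,d)$ factor, which by the symmetric form of the lemma is a copy of $E(c,d)$. This step requires knowing that the complement of an \emph{arbitrary} embedding of $E(d-c,d)$ into $B(d)$ agrees, up to a symplectomorphism of $B(d)$, with the standard one. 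That uniqueness statement for ellipsoid (equivalently, ball-packing) embeddings into a ball is exactly the rigidity result of McDuff \cite{m-di}, of the same flavor used in Lemma~\ref{lem:technical}, and it is the crux of the whole argument.

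Finally I would replace each embedded ellipsoid by its weight packing. Traynor's explicit construction \cite{traynor} gives a volume-filling embedding $\op{int}(B(p,q))\to\op{int}(E(p,q))$, so an ellipsoid embeds wherever its packing does in the easy direction; the nontrivial converse, that the packing embeds only where the ellipsoid does, again follows from the blow-up correspondence of Proposition~\ref{prop:tfae} together with the uniqueness input above. Applying this equivalence to swap $\op{int}(E(a,b))\leftrightarrow\op{int}(B(a,b))$ and $\op{int}(E(d-c,d))\leftrightarrow\op{int}(B(d-c,d))$ on the right-hand side of the target conversion yields
\[
\op{int}(E(a,b))\to E(c,d) \iff \op{int}(B(a,b))\sqcup\op{int}(B(d-c,d)) = \op{int}(B(a,b)\sqcup B(d-c,d)) \to B(d),
\]
which is the assertion of the proposition. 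I expect the main obstacle to be the reverse direction of the target conversion: controlling the complement of the embedded balls $B(d-c,d)$ inside $B(d)$ needs the full strength of the uniqueness theory for ball and ellipsoid embeddings, rather than merely an explicit packing, and it is precisely there that Seiberg–Witten–based rigidity enters.
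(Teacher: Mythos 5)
Your first half --- the moment-triangle decomposition of $\Delta(d,d)$ into $\Delta(c,d)$ and a complementary triangle, Traynor's embeddings of open balls into preimages of open triangles, and the composition argument --- is exactly the argument the paper gives, and it is in fact \emph{all} the paper proves: the survey establishes only the ``easy'' direction (an ellipsoid embedding yields the ball packing \eqref{eqn:emb0}), via \eqref{eqn:emb1} and \eqref{eqn:ome1}, and defers the converse to McDuff \cite{m-e}. One correction even here: your ``geometric lemma'' overstates what the toric picture gives. The complement $\op{int}(B(d))\setminus E(c,d)$ is \emph{not} symplectomorphic to an open ellipsoid: the affine equivalence of triangles identifies it with $\op{int}(E(d,d-c))$ minus the disk lying over one axis (the preimage of one edge is missing), and this set is homotopy equivalent to a circle, hence not even diffeomorphic to an ellipsoid. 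This is precisely why the paper routes the construction through Traynor's theorem, embedding only the open \emph{balls} of the weight expansion into preimages of open triangles, rather than asserting an ellipsoid-to-ellipsoid identification of complements.

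The genuine gaps are in your converse, and both of the inputs you invoke there fail to deliver the step they are asked to carry. First, the rigidity result of \cite{m-di} (the one used in Lemma~\ref{lem:technical}) is a uniqueness theorem for embeddings of disjoint unions of \emph{closed balls} into a ball; the statement you need --- that an arbitrary embedding of $\op{int}(E(d-c,d))$ into $B(d)$ can be moved by an ambient symplectomorphism onto the standard toric region, so that its complement contains $E(c,d)$ --- is a uniqueness theorem for \emph{ellipsoid} embeddings. That statement is not in \cite{m-di}; in McDuff's work it is obtained as a byproduct of the very theorem being proved, so invoking it as an input is circular. Second, and more fundamentally, Proposition~\ref{prop:tfae} is purely a statement about balls: it converts a ball packing into a cohomological condition on a blowup of $\CP^2$, but contains no mechanism whatsoever for producing an \emph{ellipsoid} embedding from a ball packing. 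Closing that step is the actual content of \cite{m-e}: for $a/b$ rational one identifies the result of removing $E(a,b)$ and collapsing its boundary with a blowup containing a \emph{chain of exceptional spheres} (a Hirzebruch--Jung-type resolution whose classes are determined by the weight expansion), and one then inflates along this chain, using the deformation-to-isotopy results of \cite{m-di} and Li--Liu, to convert the cohomological data of a ball packing back into an embedded ellipsoid. This chain-of-spheres/inflation idea is entirely absent from your proposal, and without it your chain of equivalences cannot be completed; what you have written is a correct proof of the easy direction together with a reduction of the hard direction to statements that are themselves equivalent in difficulty to the proposition.
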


\begin{proof}
We will only explain the easier direction, namely why an ellipsoid
embedding gives rise to a ball packing.  For this purpose consider the moment
map $\mu:\C^2\to\R^2$ defined by $\mu(z_1,z_2)=\pi(|z_1|^2,|z_2|^2)$.
Call two subsets of $\R^2$ ``affine
equivalent'' if one can be obtained from the other by the action of
$\op{SL}(2,\Z)$ and translations.  Note that if $U_1,U_2$ are affine
equivalent open sets in the positive quadrant of $\R^2$, then
$\mu^{-1}(U_1)$ and $\mu^{-1}(U_2)$ are symplectomorphic.

If $a,b>0$, let $\Delta(a,b)$ denote the triangle in $\R^2$ with
vertices $(0,0)$, $(a,0)$, and $(0,b)$.  Then
$E(a,b)=\mu^{-1}(\Delta(a,b))$.  If $a<b$, then $\Delta(a,b)$ is the
union (along a line segment) of $\Delta(a,a)$ and a triangle which is
affine equivalent to $\Delta(a,b-a)$.  It follows by induction that
if $W(a,b)=(a_1,\ldots,a_m)$, then
$\Delta(a,b)$ is partitioned into $m$ triangles, such that the
$i^{th}$ triangle is affine equivalent to $\Delta(a_i,a_i)$.  By
Traynor \cite{traynor}, there is a symplectic embedding of
$\op{int}(B(a_i))$ into $\mu^{-1}(\op{int}(\Delta(a_i,a_i)))$.  Hence
there is a symplectic embedding
\begin{equation}
\label{eqn:emb1}
\op{int} (B(a,b)) \to
\op{int}(E(a,b)).
\end{equation}
Likewise,
$\op{int}(\Delta(d,d))\setminus\Delta(c,d)$ is affine equivalent to
$\op{int}(\Delta(d-c,d))$, so there is a symplectic embedding
\[
\op{int} (B(d-c,d))\to B(d)\setminus E(c,d),
\]
and hence a symplectic embedding
\begin{equation}
\label{eqn:ome1}
\op{int}(E(c,d)\sqcup B(d-c,d))\to B(d).
\end{equation}
If there is a symplectic embedding $\op{int}(E(a,b))\to E(c,d)$, then
composing this with the embeddings \eqref{eqn:emb1} and
\eqref{eqn:ome1} gives a symplectic embedding as in \eqref{eqn:emb0}.
\end{proof}

The idea of the proof of Theorem~\ref{thm:ellipsoid} is to use the
fact that the existence of an ellipsoid embedding is equivalent to the
existence of a ball packing, and the fact that ECH capacities give a
sharp obstruction to the existence of ball packings, to deduce that
ECH capacities give a sharp obstruction to the existence of ellipsoid
embeddings.  To proceed with the details, if $a_\bullet$ and
$a'_\bullet$ are sequences of real numbers indexed by nonnegative
integers, define another such sequence $a_\bullet\# a'_\bullet$ by
\[
(a_\bullet\# a'_\bullet)_k=\max_{i+j=k}(a_i+a'_j).
\]
Note that the operation $\#$ is associative, and the Disjoint Union
axiom of ECH capacities can be restated as
\[
c_\bullet(X_1\sqcup X_2) =
c_\bullet(X_1)\# c_\bullet(X_2).
\]

\noindent{\em Proof of Theorem~\ref{thm:ellipsoid}.}
The ``only if'' part follows from Corollary~\ref{cor:necessary}(a).
To prove the ``if'' part, assume without loss of generality that $a/b$
and $c/d$ are rational and $c<d$, and suppose that $\mc{N}(a,b)\le
\mc{N}(c,d)$.  By Proposition~\ref{prop:me}, we need to show that
there exists a symplectic embedding as in \eqref{eqn:emb0}.  By
Theorem~\ref{thm:packing} and the calculation in
Corollary~\ref{cor:necessary}(b), it is enough to show that
\[
c_\bullet(B(a,b)\sqcup B(d-c,d))\le c_\bullet(B(d)).
\]
To prove this, first note that applying the Monotonicity axiom
to the embedding \eqref{eqn:emb1} and using our hypothesis gives
\[
c_\bullet(B(a,b)) \le c_\bullet(E(a,b)) \le c_\bullet(E(c,d)).
\]
By the Disjoint Union axiom and the fact that the operation `$\#$'
respects inequality of sequences, it follows that
\[
c_\bullet(B(a,b)\sqcup B(d-c,d))\le c_\bullet(E(c,d)\sqcup B(d-c,d)).
\]
On the other hand, applying Monotonicity to the embedding
\eqref{eqn:ome1} gives
\[
c_\bullet(E(c,d)\sqcup B(d-c,d)) \le c_\bullet(B(d)).
\]
By the above two inequalities we are done.
\hfill
$\Box$

\begin{remark}
The above is McDuff's original proof of Theorem~\ref{thm:ellipsoid}.
Her subsequent proof in \cite{m-h} avoids using the monotonicity of
ECH capacities (a heavy piece of machinery) as follows.  The idea is
to {\em define\/} the ECH capacities of any union of balls or
ellipsoids by the Ellipsoid and Disjoint Union axioms, and then to
algebraically justify all invocations of Monotonicity in the proof.
For example, in the above argument for the `if' part of
Theorem~\ref{thm:ellipsoid}, in the first step one needs to show that
if $a/b$ is rational then $c_\bullet(B(a,b))\le c_\bullet(E(a,b))$.
In fact one can show algebraically that
$c_\bullet(B(a,b))=c_\bullet(E(a,b))$.  To do so, by induction and the
associativity of $\#$, it is enough to show that if $a/b$ is rational
and $a<b$ then $\mc{N}(a,b)=\mc{N}(a,a)\#\mc{N}(a,b-a)$.  The proof of
this may be found in \cite{m-h}.
\end{remark}

\begin{remark}
The proof of Theorem~\ref{thm:ellipsoid} generalizes to show that ECH
capacities give a sharp obstruction to symplectically embedding any
disjoint union of finitely many ellipsoids into an ellipsoid.
\end{remark}

\begin{remark}
Theorem~\ref{thm:ellipsoid} does not directly
generalize to higher dimensions.  That is, if one defines
$\mc{N}(a_1,\ldots,a_n)$ to be the sequence of nonnegative integer
linear combinations of $a_1,\ldots,a_n$ in increasing order, then when
$n>2$ it is not true that $\op{int}(E(a_1,\ldots,a_n))$ symplectically
embeds into $E(a_1',\ldots,a_n')$ if and only if
$\mc{N}(a_1,\ldots,a_n)\le \mc{N}(a_1',\ldots,a_n')$.  In particular,
Hind-Kerman \cite{hk} used methods of Guth \cite{guth} to show that
$E(1,R,R)$ symplectically embeds into $E(a,a,R^2)$
whenever $a>3$.  However if $R$ is sufficiently large with respect to
$a$ then $\mc{N}(1,R,R)\not\le \mc{N}(a,a,R^2)$.
\end{remark}

\section*{Embedded contact homology}

In the ball packing story above, an important role was played by
Taubes's ``SW=Gr'' theorem, which relates Seiberg-Witten invariants of
symplectic 4-manifolds to holomorphic curves.  The ECH capacities are
defined using an analogue of ``SW=Gr'' for contact 3-manifolds.

Let $Y$ be a closed oriented $3$-manifold.  Recall that a {\em contact
  form\/} on $Y$ is a $1$-form $\lambda$ on $Y$ such that
$\lambda\wedge d\lambda>0$ everywhere.  The contact form $\lambda$
determines a {\em Reeb vector field\/} $R$ characterized by
$d\lambda(R,\cdot)=0$ and $\lambda(R)=1$.  A {\em Reeb orbit\/} is a
closed orbit of $R$, i.e.\ a map $\gamma:\R/T\Z\to Y$ for some $T>0$,
modulo reparametrization, such that $\gamma'(t)=R(\gamma(t))$.  The
contact form $\lambda$ is called ``nondegenerate'' if all Reeb orbits
are cut out transversely in an appropriate sense.  This holds for
generic contact forms $\lambda$.

If $\lambda$ is a nondegenerate contact form on $Y$ as above, and if
$\Gamma\in H_1(Y)$, the {\em embedded contact homology\/}
$ECH_*(Y,\lambda,\Gamma)$ is defined as follows.  It is the homology
of a chain complex $ECC_*(Y,\lambda,\Gamma)$ which is freely generated
over $\Z/2$ (it can also be defined over $\Z$ but this will not be
needed here).  A generator is a finite set of pairs
$\alpha=\{(\alpha_i,m_i)\}$ where the $\alpha_i$'s are distinct
embedded Reeb orbits, the $m_i$'s are positive integers, $m_i=1$
whenever $\alpha_i$ is hyperbolic (i.e.\ the linearized Reeb flow
around $\alpha_i$ has real eigenvalues), and
$\sum_im_i[\alpha_i]=\Gamma \in H_1(Y)$.  The chain complex has a
relative grading which is defined in \cite{ir}; the details of this
are not important here.

To define the differential
\[
\partial: ECC_*(Y,\lambda,\Gamma) \to ECC_{*-1}(Y,\lambda,\Gamma)
\]
one chooses a generic almost complex structure $J$ on $\R\times Y$
with the following properties: $J$ is $\R$-invariant,
$J(\partial_s)=R$ where $s$ denotes the $\R$ coordinate, and $J$ sends
$\Ker(\lambda)$ to itself, rotating positively in the sense that
$d\lambda(v,Jv)>0$ for $0\neq v\in\Ker(\lambda)$.  If
$\alpha=\{(\alpha_i,m_i)\}$ and $\beta=\{(\beta_j,n_j)\}$ are two
chain complex generators, then the differential coefficient
$\langle\partial\alpha,\beta\rangle\in\Z/2$ is a mod 2 count of
$J$-holomorphic curves in $\R\times Y$ which have ``ECH index'' equal
to $1$ and which converge as currents to $\sum_im_i\alpha_i$ as
$s\to+\infty$ and to $\sum_jn_j\beta_j$ as $s\to-\infty$.  Holomorphic
curves with ECH index $1$ have various special properties, one of
which is that they are embedded (except that they may include multiply
covered $\R$-invariant cylinders), hence the name ``embedded contact
homology''.  For details see \cite{icm} and the references therein.
It is shown in \cite{obg1} that $\partial^2=0$.  Although the
differential usually depends on the choice of $J$, the homology of the
chain complex does not.  In fact it is shown by Taubes \cite{echswf1}
that $ECH_*(Y,\lambda,\Gamma)$ is isomorphic to a version of
Seiberg-Witten Floer cohomology of $Y$ as defined in \cite{km2}.

The definition of ECH capacities only uses the case $\Gamma=0$.

To define the ECH capacities we need to recall four additional
structures on embedded contact homology:

(1) There is a chain map
\[
U: ECC_*(Y,\lambda,\Gamma) \to ECC_{*-2}(Y,\lambda,\Gamma)
\]
which counts $J$-holomorphic curves of ECH index $2$ passing through a
generic point in $z\in \R\times Y$, see \cite{wh}.  The induced map on
homology
\[
U:ECH_*(Y,\lambda,\Gamma) \to ECH_{*-2}(Y,\lambda,\Gamma)
\]
does not depend on $z$ when $Y$ is connected.  If $Y$ has $n$
components, then there are $n$ different versions of the $U$ map.

(2) If $\alpha=\{(\alpha_i,m_i)\}$ is a generator of the ECH chain complex,
  define its {\em symplectic action\/} by
\[
\mc{A}(\alpha) = \sum_im_i\int_{\alpha_i}\lambda \in \R^{\ge 0}.
\]
It follows from the conditions on $J$ that the differential decreases
the symplectic action, i.e.\ if
$\langle\partial\alpha,\beta\rangle\neq 0$ then
$\mc{A}(\alpha)>\mc{A}(\beta)$.  Hence for each $L\in\R$ we can define
$ECH^L(Y,\lambda,\Gamma)$ to be the homology of the subcomplex spanned
by generators with action less than $L$.  It is shown in \cite{cc2}
that this does not depend on $J$, although unlike the usual ECH it
does depend strongly on $\lambda$.

(3) The {\em empty set\/} of Reeb orbits is a legitimate generator of
the ECH chain complex, and by the above discussion it is a cycle.
Thus we have a canonical element
\[
[\emptyset]\in ECH_*(Y,\lambda,0).
\]

(4) Let $(Y_+,\lambda_+)$ and $(Y_-,\lambda_-)$ be closed oriented
3-manifolds with nondegenerate contact forms. A {\em weakly exact
  symplectic cobordism\/} from $(Y_+,\lambda_+)$ to $(Y_-,\lambda_-)$
is a compact symplectic 4-manifold $(X,\omega)$ such that $\partial X
= Y_+ - Y_-$, the form $\omega$ on $X$ is exact, and
$\omega|_{Y_\pm} = d\lambda_\pm$.  The key result which enables the
definition of the ECH capacities is the following:

\begin{theorem}
\label{thm:cob}
A weakly exact symplectic cobordism $(X,\omega)$ as above induces maps
\[
\Phi^L(X,\omega): ECH_*^L(Y_+,\lambda_+,0) \to ECH_*^L(Y_-,\lambda_-,0)
\]
for each $L\in\R$ with the following properties:
\begin{description}
\item{(a)}
$\Phi^L[\emptyset]=[\emptyset]$.
\item{(b)}
If $U_+$ is any of the $U$ maps for $Y_+$, and if $U_-$ is any of the
$U$ maps for $Y_-$ corresponding to the same component of $X$, then
$\Phi^L\circ U_+ = U_- \circ \Phi^L$.
\end{description}
\end{theorem}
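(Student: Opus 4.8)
The plan is to \emph{not} define $\Phi^L$ by directly counting holomorphic curves in the cobordism, but instead to transport cobordism maps from Seiberg-Witten theory via Taubes's isomorphism. It is worth first recording why the naive approach fails, since this is what forces the detour through gauge theory. One would like to complete $X$ by attaching cylindrical ends $[0,\infty)\times Y_+$ and $(-\infty,0]\times Y_-$, extend $\omega$ and choose a compatible almost complex structure, and then count ECH index zero $J$-holomorphic curves in the completion $\overline X$ to define a chain map. In an $\R$-invariant setting the ECH index controls the Fredholm index and forces embeddedness, but in a genuine (non-$\R$-invariant) cobordism this control is lost for multiply covered curves, so transversality fails and the relevant index zero moduli spaces cannot be shown to be compact and finite. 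This is precisely the obstruction that the Seiberg-Witten route circumvents.

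The construction itself proceeds as follows. Recall Taubes's isomorphism \cite{echswf1} between $ECH_*(Y,\lambda,0)$ and a version of Seiberg-Witten Floer cohomology as in \cite{km2}, and, more importantly, its refinement to the filtered level: after deforming the Seiberg-Witten equations on $Y$ by a large multiple $r$ of $d\lambda$ in the manner underlying ``$\SW\Rightarrow\op{Gr}$'' \cite{swgr}, the action filtration on the ECH chain complex matches a filtration on the Seiberg-Witten side by the energy of solutions, so that $ECH^L_*(Y,\lambda,0)$ is identified with a filtered piece of the Seiberg-Witten Floer cohomology. Next, perturb the Seiberg-Witten equations on $\overline X$ using the symplectic form in the same way; the Kronheimer-Mrowka construction \cite{km2} then furnishes a cobordism map on Seiberg-Witten Floer cohomology. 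The decisive point, and where the weakly exact hypothesis is used, is that exactness of $\omega$ on $X$ together with the matching condition $\omega|_{Y_\pm}=d\lambda_\pm$ yields an a priori bound, via Stokes's theorem, on the energy of any solution on $\overline X$ in terms of the symplectic actions at the two ends. This energy bound is what guarantees that the cobordism map respects the two action filtrations and therefore descends to the desired map $\Phi^L$ between the filtered groups; one also checks in the usual way that the result is independent of the auxiliary choices of almost complex structure and perturbation.

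Property (a) then follows because, under Taubes's isomorphism, the empty set of Reeb orbits corresponds to the distinguished lowest-action class on the Seiberg-Witten side; the energy estimate forces the only contribution to $\Phi^L[\emptyset]$ to come from the distinguished low-energy solution on $\overline X$, which Taubes's analysis identifies with $[\emptyset]$ at the other end. Property (b) follows from naturality on the Seiberg-Witten side: the ECH $U$ map corresponds to the degree two module operation on Seiberg-Witten Floer cohomology (cap product with the base-point class), Kronheimer-Mrowka cobordism maps are maps of modules for this operation, and the bookkeeping of components of $\overline X$ matches the correspondence between the $U$ maps on $Y_+$ and $Y_-$. Transporting this module naturality back through the filtered isomorphism gives $\Phi^L\circ U_+ = U_-\circ\Phi^L$.

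The main obstacle is the filtered isomorphism together with its compatibility with the cobordism map: one must show that Taubes's correspondence between Reeb orbits and Seiberg-Witten solutions is compatible with the two filtrations uniformly enough that the Seiberg-Witten cobordism map preserves the level $L$, and that no spurious low-energy solutions appear on $\overline X$ in a way that would spoil (a). This is the heavy analytic content, resting on Taubes's estimates as $r\to\infty$ and the energy bound supplied by weak exactness. Once this input is in place, properties (a) and (b) are essentially formal consequences of the standard structure of Seiberg-Witten cobordism maps.
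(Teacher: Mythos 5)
Your proposal is correct and follows essentially the same route as the paper: both reject the direct count of ECH index zero curves in the completed cobordism because of multiply covered curves of negative ECH index, and both then define $\Phi^L$ and verify properties (a) and (b) by passing to Seiberg--Witten Floer theory via Taubes's isomorphism, with weak exactness supplying the energy bound that makes the maps respect the action filtration. The paper simply defers these Seiberg--Witten details to \cite{cc2}, \cite{qech}, and \cite{echswf1}, whereas you spell them out; the substance is the same.
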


\noindent{\em Idea of proof.} 
This theorem follows from a slight modification of the main result of
\cite{cc2}, as explained in \cite{qech}.  The first step is to define a ``completion'' $\overline{X}$ of $X$ by
attaching cylindrical ends $[0,\infty)\times Y_+$ to the positive boundary and
$(-\infty,0]\times Y_-$ to the negative boundary.  One chooses an almost
complex structure $J$ on $\overline{X}$ which is $\omega$-compatible
on $X$ and which on the ends agrees with almost complex structures
as needed to define the ECH of $(Y_\pm,\lambda_\pm)$.

One would like to define a chain map $ECC_*(Y_+,\lambda_+,0)\to
ECC_*(Y_-,\lambda_-,0)$ by counting $J$-holomorphic curves in
$\overline{X}$ with ECH index $0$.  Considering ends of ECH index $1$
moduli spaces would prove that this is a chain map.  The conditions on
$J$ and the fact that we are restricting to $\Gamma=0$ imply that this
map would respect the symplectic action filtrations and satisfy
property (a).  To prove property (b) one would choose a path $\rho$ in
$\overline{X}$ from a positive cylindrical end to a negative
cylindrical end.  Counting ECH index $1$ curves that pass through
$\rho$ would then define a chain homotopy as needed to prove that
$\Phi^L\circ U_+ = U_-\circ \Phi^L$.

Unfortunately, it is not currently known how to define $\Phi^L$ by
counting holomorphic curves as above, due to technical difficulties
caused by multiply covered holomorphic curves with negative ECH index,
see \cite[\S5]{ir}.  However one can still define $\Phi^L$ and prove
properties (a) and (b) by passing to Seiberg-Witten theory, using
arguments from \cite{echswf1}.  \hfill$\Box$

\section*{Definition of ECH capacities}

Let $Y$ be a closed oriented 3-manifold with a contact form
$\lambda$, and suppose that $[\emptyset]\neq 0\in
ECH_*(Y,\lambda,0)$.  We then define a sequence of real numbers
\[
0 = c_0(Y,\lambda) < c_1(Y,\lambda) \le c_2(Y,\lambda) \le \cdots \le
\infty
\]
as follows.  Suppose first that $\lambda$ is nondegenerate.  If $Y$ is
connected, define
\[
c_k(Y,\lambda) = \inf\left\{L\in\R\;\big|\; \exists \eta\in ECH^L(Y,\lambda,0) :
U^k\eta=[\emptyset]\right\}.
\]
If $Y$ is disconnected, define $c_k(Y,\lambda)$ the same way, but
replace the condition $U^k\eta=[\emptyset]$ with the condition that
every $k$-fold composition of $U$ maps send $\eta$ to $[\emptyset]$.
Finally, if $\lambda$ is degenerate, one defines $c_k(Y,\lambda)$ by
approximating $\lambda$ by nondegenerate contact forms.

Moving back up to four dimensions, define a (four dimensional) {\em
  Liouville domain\/} to be a compact symplectic four manifold
$(X,\omega)$ such that $\omega$ is exact, and there exists a contact
form $\lambda$ on $\partial X$ with $d\lambda=\omega|_{\partial X}$.
In other words, $(X,\omega)$ is a weakly exact symplectic cobordism
from a contact three-manifold to the empty set.  For example, any
star-shaped subset of $\R^4$ is a
Liouville domain.  Here ``star-shaped'' means that the boundary is
transverse to the radial vector field, and we take the standard
symplectic form \eqref{eqn:std} as usual.

If $(X,\omega)$ is a Liouville domain, define its ECH capacities by
\[
c_k(X,\omega) = c_k(\partial X,\lambda)
\]
where $\lambda$ is any contact form on $\partial X$ with
$d\lambda=\omega|_{\partial X}$.  Note here that $c_k(\partial
X,\lambda)$ is defined because it follows from
Theorem~\ref{thm:cob}(a) that $[\emptyset]\neq 0\in ECH_*(\partial
X,\lambda,0)$.  Also, $c_k(X,\omega)$ does not depend on $\lambda$,
because changing $\lambda$ will not change the ECH chain complex, and
the fact that we restrict to $\Gamma=0$ implies that changing
$\lambda$ does not affect the symplectic action filtration.

\begin{proof}[Proof of Theorem~\ref{thm:qech} (for Liouville
  domains).]
The Conformality axiom follows directly from the definition.  The
Ellipsoid and Disjoint Union axioms are proved by direct calculations
in \cite{qech}.  To prove the Monotonicity axiom, let $(X_0,\omega_0)$
and $(X_1,\omega_1)$ be Liouville domains and let
$\phi:(X_0,\omega_0)\to (X_1,\omega_1)$ be a symplectic embedding. Let
$\lambda_i$ be a contact form on $\partial X_i$ with
$d\lambda_i=\omega|_{\partial X_i}$ for $i=0,1$.  By a continuity
argument we can assume without loss of generality that
$\phi(X_0)\subset \op{int}(X_1)$ and that the contact forms
$\lambda_i$ are nondegenerate.  Then
$(X_1\setminus\phi(\op{int}(X_1)),\omega_1)$ defines a weakly exact
symplectic cobordism from $(\partial X_1,\lambda_1)$ to $(\partial
X_0,\lambda_0)$.  It follows immediately from Theorem~\ref{thm:cob}
that $c_k(\partial X_1,\lambda_1) \ge c_k(\partial X_0,\lambda_0)$,
because the maps $\Phi^L$ preserve the set of ECH classes $\eta$ with
$U^k\eta=[\emptyset]$.
\end{proof}

More generally, $c_k$ of an arbitrary symplectic manifold $(X,\omega)$
is defined to be the supremum of $c_k(X',\omega')$, where
$(X',\omega')$ is a Liouville domain that can be symplectically
embedded into $(X,\omega)$.

\section*{More examples of ECH capacities}

\begin{theorem}
\label{thm:polydisk}
\cite{qech}
The ECH capacities of a polydisk are
\[
c_k(P(a,b))=\min\{am+bn \mid m,n\in\N, (m+1)(n+1)\ge k+1\}.
\]
\end{theorem}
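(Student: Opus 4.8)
The plan is to compute the left-hand side directly from the definition of ECH capacities by analyzing the embedded contact homology of the boundary. Topologically $\partial P(a,b) \cong S^3$, and the key input is that the ECH of $S^3$ (with any contact form), as a graded group with its $U$-map, is the ``standard'' one: it has rank one in each even grading $0,2,4,\ldots$, vanishes in odd gradings, the generator $\sigma_0$ of grading $0$ is $[\emptyset]$, and $U$ is an isomorphism carrying $\sigma_k$ to $\sigma_{k-1}$. Granting this, the only class $\eta$ with $U^k\eta=[\emptyset]$ is $\sigma_k$, so unwinding the definition gives
\[
c_k(P(a,b)) = \inf\{L \mid \sigma_k \text{ has a representative cycle of action} < L\},
\]
i.e.\ $c_k$ is the minimal symplectic action of an ECH cycle of grading $2k$ representing the nonzero class. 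Since the contact form on $\partial P(a,b)$ is degenerate along the corner torus $\{\pi|z_1|^2=a,\,\pi|z_2|^2=b\}$, I would first replace $P(a,b)$ by a nondegenerate perturbation (a Morse--Bott perturbation of the solid-torus families of Reeb orbits on the two smooth faces, as in the ellipsoid computation in \cite{qech}), compute there, and recover the degenerate answer via the approximation clause in the definition of $c_k$, using that the final formula is continuous in $(a,b)$.

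Next I would set up the combinatorial model. After perturbation the Reeb orbits are organized by the moment rectangle $[0,a]\times[0,b]$: the two faces contribute elliptic orbits $e_1$ (over the edge $\{x=a\}$, action $a$) and $e_2$ (over $\{y=b\}$, action $b$), while smoothing the corner produces the remaining orbits. As for the ellipsoid, an ECH generator is then encoded by a convex lattice path $\Lambda$ in the first quadrant running from the $y$-axis to the $x$-axis, its edges recording orbit multiplicities. I need two quantities. The \emph{symplectic action} $A(\Lambda)$ is additive over edges: an edge equal to $\ell$ times a primitive vector with outward normal $(q,p)$ contributes $\ell(aq+bp)$, the value of the support function of the rectangle; in particular a vertical edge of lattice-length $n$ contributes $na$ and a horizontal edge of lattice-length $m$ contributes $mb$. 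The \emph{ECH index} is governed by the analogue of the ellipsoid index formula: an all-elliptic convex generator enclosing $L$ lattice points (counting those on the axes and on $\Lambda$) has grading $2(L-1)$. The distinguished generators are the ``L-shaped'' paths $G_{m,n}$ running $(0,n)\to(m,n)\to(m,0)$, which bound the rectangle $[0,m]\times[0,n]$: these have action $an+bm$ and, since the rectangle contains $(m+1)(n+1)$ lattice points, grading $2\big((m+1)(n+1)-1\big)$.

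For the upper bound I would feed the $G_{m,n}$ into the definition. Assuming $G_{m,n}$ is a nonbounding cycle, it represents $\sigma_{(m+1)(n+1)-1}$, so $U^{(m+1)(n+1)-1}$ sends it to $[\emptyset]$; since its action is $an+bm$, this yields $c_{(m+1)(n+1)-1}(P(a,b)) \le an+bm$. Because $c_k$ is nondecreasing in $k$, the same bound holds for every $k$ with $k+1\le (m+1)(n+1)$. Minimizing over $(m,n)$, and relabelling $m\leftrightarrow n$ (which leaves the symmetric constraint unchanged), gives
\[
c_k(P(a,b)) \le \min\{am+bn \mid (m+1)(n+1)\ge k+1\}.
\]

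The reverse inequality is where the real work lies. I must show no cycle of grading $2k$ has smaller action, i.e.\ that every convex generator $\Lambda$ with $I(\Lambda)=2k$ satisfies $A(\Lambda)\ge \min\{am+bn \mid (m+1)(n+1)\ge k+1\}$. By the index formula, $I(\Lambda)=2k$ forces the enclosed lattice count to be at least $k+1$ (hyperbolic edges only lower the index for a given lattice count, so the bound $L\ge k+1$ is unaffected), which reduces the claim to a purely combinatorial one: among convex lattice paths enclosing at least $k+1$ lattice points, the action functional $A$ is minimized by a rectangle $G_{m,n}$. I expect this discrete isoperimetric optimization, together with the careful index bookkeeping at the smoothed corner and the verification that the minimal-action generator of each grading is genuinely a nonbounding cycle (rather than being killed in the differential), to be the main obstacle; the degenerate-to-nondegenerate limit and the upper bound are comparatively routine once the index and action formulas are in hand.
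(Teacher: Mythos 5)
Your combinatorial picture of $\partial P(a,b)$ (generators encoded by convex lattice paths, action given by the support function of the moment rectangle, index $2(L-1)$ for all-elliptic generators) is the correct one, but the proposal has a genuine gap at the step that carries all the weight: the claim that the L-shaped path $G_{m,n}$ is a cycle which is not a boundary, so that it represents $\sigma_{(m+1)(n+1)-1}$. Your entire upper bound is conditional on this (``Assuming $G_{m,n}$ is a nonbounding cycle\ldots''), and you present it as a verification to be done alongside ``index bookkeeping''. It is not. In the ellipsoid computation of \cite{qech} the nondegenerate boundary has exactly one generator in each even grading and none in odd gradings, so the differential vanishes for degree reasons and every generator automatically represents a nonzero class; for the perturbed polydisk boundary there are many generators in each grading, including odd-grading and hyperbolic-label ones, the differential and chain-level $U$ map are genuinely nonzero, and computing them for such a Morse--Bott toric contact form on $S^3$ is precisely what nobody knows how to do directly --- all existing computations of these capacities proceed indirectly, and even the index and action formulas you quote were established only in later work, by indirect means. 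The same issue affects your lower bound, which invokes the unproven index formula. Meanwhile the step you single out as ``where the real work lies'', the discrete isoperimetric problem, is actually trivial for the polydisk: the support function of a rectangle is linear on the first quadrant, so the action of a convex path from $(0,y)$ to $(x,0)$ equals $ay+bx$, depending only on the endpoints, and passing to the bounding box can only increase the lattice-point count at equal action. So your estimate of where the difficulty sits is inverted, and the hard part is not addressed.

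This is exactly why the paper does not argue this way. Theorem~\ref{thm:polydisk} is deduced in \cite{qech} from Theorem~\ref{thm:dual}, the computation of $c_k(T_{\|\cdot\|^*})$ for domains in $T^*T^2$, and that computation rests on \cite{t3}, where the full ECH chain complex of $T^3$ --- differential (``rounding corners'') and $U$ map included --- was determined combinatorially. The polydisk is then handled by a monotonicity sandwich rather than by its own chain complex: by the Traynor trick the open disk of area $a$ is symplectomorphic to an open square, so $\op{int}(P(a,b))$ embeds symplectically into the domain $T_{\|\cdot\|^*}$ whose dual unit ball is an $a\times b$ rectangle, while slightly shrunk copies of that domain embed into $\op{int}(P(a,b))$ via action-angle coordinates; Monotonicity and Conformality then squeeze $c_k(P(a,b))$ between two copies of the right-hand side of \eqref{eqn:dual}, which for this norm is the minimum of $aw+bh$ over convex lattice polygons of width $w$ and height $h$ containing $k+1$ lattice points, i.e.\ $\min\{am+bn \mid (m+1)(n+1)\ge k+1\}$ by the bounding-box argument. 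To rescue your proof you would either have to carry out the chain-level computation you are assuming (a substantial and still-open project), or replace the nonbounding-cycle claim by an embedding/monotonicity argument of this kind --- at which point you have reproduced the proof in \cite{qech}.
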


It turns out that 
ECH capacities also give a sharp obstruction to
symplectically embedding an ellipsoid into a polydisk.  The proof
uses the following analogue of Proposition~\ref{prop:me}:

\begin{proposition}[M\"uller \cite{mueller}]
\label{prop:mu}
Let $a,b,c,d>0$ with $a/b$ rational.  Then there is a symplectic
embedding $\op{int}(E(a,b))\to P(c,d)$ if and only if there is a
symplectic embedding
\[
\op{int}(B(a,b)\sqcup B(c) \sqcup B(d)) \to B(c+d).
\]
\end{proposition}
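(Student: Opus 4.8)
The plan is to mimic the proof of Proposition~\ref{prop:me}, adapting the moment-map / affine-geometry decomposition to the polydisk. The statement asserts an equivalence between an ellipsoid-into-polydisk embedding and a ball packing, and as in the ellipsoid case the natural strategy is to produce, on each side, a symplectic identification that converts the problem into the disjoint-union-of-balls picture. Since the excerpt only sketches the ``easier direction'' of Proposition~\ref{prop:me} (that an embedding yields a packing), I would focus on constructing that direction carefully for the polydisk and then invoke the analogous hard-direction machinery by reference.

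\textbf{Setup via moment maps.} First I would recall the moment map $\mu:\C^2\to\R^2$, $\mu(z_1,z_2)=\pi(|z_1|^2,|z_2|^2)$, and the notion of affine equivalence under $\op{SL}(2,\Z)$ and translations, exactly as in the proof of Proposition~\ref{prop:me}. The key objects are the moment-map images: $E(a,b)=\mu^{-1}(\Delta(a,b))$ is the triangle with vertices $(0,0),(a,0),(0,b)$, whereas the polydisk $P(c,d)=\mu^{-1}(R(c,d))$, where $R(c,d)=[0,c]\times[0,d]$ is a rectangle, and the ball $B(c+d)=\mu^{-1}(\Delta(c+d,c+d))$. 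The heart of the matter is thus a \emph{two-dimensional} affine dissection problem: I want to show that the triangle $\Delta(c+d,c+d)$ decomposes, up to affine equivalence of the pieces, into the rectangle $R(c,d)$ together with two triangles affine-equivalent to $\Delta(c,c)$ and $\Delta(d,d)$. Geometrically, a line of slope $-1$ cuts the large triangle into the rectangle $R(c,d)$ in one corner and two right triangles in the other two corners, and one checks via an explicit $\op{SL}(2,\Z)$ transformation (a shear) that these right triangles are affine-equivalent to $\Delta(c,c)$ and $\Delta(d,d)$ respectively.

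\textbf{Constructing the packing.} Granting that dissection, I would assemble the embedding as follows. As in the cited proof, the weight-expansion decomposition of $\Delta(a,b)$ together with Traynor's result \cite{traynor} gives a symplectic embedding $\op{int}(B(a,b))\to\op{int}(E(a,b))$. Composing with a hypothetical embedding $\op{int}(E(a,b))\to P(c,d)$ lands $\op{int}(B(a,b))$ inside $P(c,d)=\mu^{-1}(R(c,d))$. Meanwhile the affine dissection above, again combined with Traynor, furnishes disjoint symplectic embeddings of $\op{int}(B(c))$ and $\op{int}(B(d))$ into the two triangular corners of $\mu^{-1}(\Delta(c+d,c+d))$ that lie outside $\mu^{-1}(R(c,d))$. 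Since the rectangle and the two corner triangles are disjoint inside the large triangle, the three embeddings together produce the desired $\op{int}(B(a,b)\sqcup B(c)\sqcup B(d))\to B(c+d)$. The converse (``only if'') direction I would not prove in detail but would attribute to M\"uller \cite{mueller}, noting that it rests on the same blow-up/blow-down and inflation technology as the reverse direction of Proposition~\ref{prop:me}, now applied to the polydisk viewed after a suitable symplectic cut.

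\textbf{The main obstacle.} The step I expect to require the most care is verifying the affine equivalence of the two corner triangles to $\Delta(c,c)$ and $\Delta(d,d)$ — that is, exhibiting the explicit $\op{SL}(2,\Z)$ matrices (together with translations) realizing the equivalence and confirming that they map the \emph{open} pieces into the positive quadrant so that the symplectomorphism $\mu^{-1}(U_1)\cong\mu^{-1}(U_2)$ applies. A subtlety is that the cutting line of slope $-1$ passes through the interior, so one must treat the boundary overlaps using the open-interior formulation (as reflected in the $\op{int}(\cdots)$ in the statement) to keep the images genuinely disjoint. Once the dissection is pinned down, the rest is a formal concatenation of embeddings exactly parallel to Proposition~\ref{prop:me}; the genuinely new content is precisely this one rectangle-plus-two-triangles decomposition of the simplex.
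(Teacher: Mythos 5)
Your proposal is correct and takes essentially the same route as the paper: the paper likewise obtains the ``only if'' direction by combining the embedding $\op{int}(B(a,b))\to\op{int}(E(a,b))$ of \eqref{eqn:emb1} with the partition of $\Delta(c+d,c+d)$ into the rectangle $[0,c]\times[0,d]$ and two corner triangles (giving the embedding \eqref{eqn:ome2}), and it leaves the converse direction to M\"uller \cite{mueller}. One simplification you missed: the two corner triangles, with vertices $(c,0),(c+d,0),(c,d)$ and $(0,d),(c,d),(0,c+d)$, are literal \emph{translates} of $\Delta(d,d)$ and $\Delta(c,c)$, so the explicit $\op{SL}(2,\Z)$ shear that you single out as the main obstacle is not needed at all.
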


As in Proposition~\ref{prop:me}, the ``only if'' direction in
Proposition~\ref{prop:mu} follows from an explicit construction
(together with \eqref{eqn:emb1}).  Namely, the triangle
$\Delta(c+d,c+d)$ is partitioned into a rectangle of side lengths $c$
and $d$ together with translates of $\Delta(c,c)$ and $\Delta(d,d)$,
so there is a symplectic embedding
\begin{equation}
\label{eqn:ome2}
\op{int}(P(c,d)\sqcup B(c) \sqcup B(d)) \to B(c+d).
\end{equation}

\begin{corollary}
There is a symplectic embedding $\op{int}(E(a,b)) \to P(c,d)$ if and
only if $c_\bullet(E(a,b)) \le c_\bullet(P(c,d))$.
\end{corollary}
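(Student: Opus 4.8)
The plan is to follow the proof of Theorem~\ref{thm:ellipsoid} essentially verbatim, substituting Proposition~\ref{prop:mu} for Proposition~\ref{prop:me} and the embedding \eqref{eqn:ome2} for \eqref{eqn:ome1}. The ``only if'' direction I would dispatch first and directly: a symplectic embedding $\op{int}(E(a,b))\to P(c,d)$ together with the Monotonicity axiom of Theorem~\ref{thm:qech} gives $c_\bullet(\op{int}(E(a,b)))\le c_\bullet(P(c,d))$, and I would note that $c_\bullet(\op{int}(E(a,b)))=c_\bullet(E(a,b))$, since $E(\lambda a,\lambda b)$ embeds into $\op{int}(E(a,b))$ for every $\lambda<1$, so by the Ellipsoid and Conformality axioms and the definition of capacities as a supremum over inner Liouville domains one gets $c_k(\op{int}(E(a,b)))=\sup_{\lambda<1}\lambda\,\mc{N}(a,b)_k=\mc{N}(a,b)_k$.

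For the ``if'' direction, suppose $c_\bullet(E(a,b))\le c_\bullet(P(c,d))$. By a continuity argument as in Lemma~\ref{lem:technical}, using the monotonicity of $\mc{N}$ in its arguments to keep the capacity hypothesis intact under inner approximation, I would reduce to the case where $a/b$ is rational. Then by Proposition~\ref{prop:mu} it suffices to produce a symplectic embedding $\op{int}(B(a,b)\sqcup B(c)\sqcup B(d))\to B(c+d)$, and by Theorem~\ref{thm:packing} together with the calculation in Corollary~\ref{cor:necessary}(b) it suffices to prove the capacity inequality
\[
c_\bullet(B(a,b)\sqcup B(c)\sqcup B(d))\le c_\bullet(B(c+d)).
\]
To establish this I would first apply Monotonicity to the embedding \eqref{eqn:emb1} and then the hypothesis, obtaining $c_\bullet(B(a,b))\le c_\bullet(E(a,b))\le c_\bullet(P(c,d))$. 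Since the operation $\#$ respects inequalities of sequences, taking $\#$ with $c_\bullet(B(c))\#c_\bullet(B(d))$ and invoking the Disjoint Union axiom yields
\[
c_\bullet(B(a,b)\sqcup B(c)\sqcup B(d))\le c_\bullet(P(c,d)\sqcup B(c)\sqcup B(d)).
\]
Finally, applying Monotonicity to the embedding \eqref{eqn:ome2} gives $c_\bullet(P(c,d)\sqcup B(c)\sqcup B(d))\le c_\bullet(B(c+d))$, and combining the last two displays completes the argument.

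The step carrying all the real content is the reduction, via Proposition~\ref{prop:mu} and Theorem~\ref{thm:packing}, of the embedding question to a ball-packing question whose obstruction is already known to be sharp; once these are granted, everything above is formal manipulation of the ECH capacity axioms, exactly as in the ellipsoid case. The only genuine subtlety internal to this plan is the continuity reduction to rational $a/b$, which is forced on us because Proposition~\ref{prop:mu} is stated only under that hypothesis; I expect it to go through by the same direct-limit construction used in Lemma~\ref{lem:technical}, approximating $\op{int}(E(a,b))$ from inside by rational ellipsoids $E(a',b')$ with $a'<a$, $b'<b$, for which $\mc{N}(a',b')\le\mc{N}(a,b)\le c_\bullet(P(c,d))$ preserves the hypothesis.
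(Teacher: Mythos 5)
Your proposal is correct and takes essentially the same route as the paper: the paper's entire proof of this corollary is the instruction to copy the proof of Theorem~\ref{thm:ellipsoid}, using Proposition~\ref{prop:mu} and \eqref{eqn:ome2} in place of Proposition~\ref{prop:me} and \eqref{eqn:ome1}, which is exactly what you carry out. Your explicit treatment of the rationality reduction and of the open-versus-closed ellipsoid point is consistent with (indeed slightly more careful than) the continuity arguments the paper invokes implicitly.
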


\begin{proof}
  Copy the above proof of Theorem~\ref{thm:ellipsoid}, using
  Proposition~\ref{prop:mu} and \eqref{eqn:ome2} in place of
  Proposition~\ref{prop:me} and \eqref{eqn:ome1}.
\end{proof}

%

\noindent{\bf Remark.}
ECH capacities do not always give sharp obstructions to symplectically
embedding a polydisk into an ellipsoid.  For example, it is easy to
check that $c_\bullet(P(1,1))=c_\bullet(E(1,2))$.  Thus ECH capacities
give no obstruction to symplectically embedding $P(1,1)$ into
$E(a,2a)$ whenever $a>1$.
However the Ekeland-Hofer
capacities (see \cite{chls}) show that $P(1,1)$ does not symplectically
embed into $E(a,2a)$ whenever $a<3/2$.  And the latter bound is sharp,
since according to our definitions $P(1,1)$ is a subset of $E(3/2,3)$.

Theorem~\ref{thm:polydisk} is deduced in \cite{qech} from the
following more general calculation, proved using results from
\cite{t3}.
 Let $\|\cdot\|$ be a norm on
$\R^2$, regarded as a translation-invariant norm on $TT^2$.  Let
$\|\cdot\|^*$ denote the dual norm on $T^*T^2$.
Define
\[
T_{\|\cdot\|^*} \eqdef \left\{\zeta\in T^*T^2 \;\big|\; \|\zeta\|^*\le
    1\right\},
\]
with the canonical symplectic form on $T^*T^2$.

\begin{theorem}
\label{thm:dual}
\cite{qech}
If $\|\cdot\|$ is a norm on $\R^2$, then
\begin{equation}
\label{eqn:dual}
c_k\left(T_{\|\cdot\|^*}\right) =
\min\left\{\ell_{\|\cdot\|}(\Lambda) \,\big|\, |P_\Lambda\cap\Z^2|=k+1
\right\}.
\end{equation}
Here the minimum is over convex polygons $\Lambda$ in $\R^2$ with
vertices in $\Z^2$, and $P_\Lambda$ denotes the closed region bounded
by $\Lambda$.  Also $\ell_{\|\cdot\|}(\Lambda)$ denotes the length of
$\Lambda$ in the norm $\|\cdot\|$.
\end{theorem}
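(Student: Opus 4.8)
The plan is to reduce the computation to the embedded contact homology of the three-torus computed in \cite{t3}, and then to read off the capacities from a combinatorial description, due to \cite{t3}, of the action filtration and the $U$ map in terms of lattice polygons. First I would use the definition of ECH capacities of a Liouville domain: since $T_{\|\cdot\|^*}$ is fiberwise the unit ball of the dual norm in $T^*T^2$, it is a Liouville domain with $\lambda$ the restriction of the tautological one-form, and $c_k(T_{\|\cdot\|^*})=c_k(\partial T_{\|\cdot\|^*},\lambda)$, with $[\emptyset]\neq 0$ as guaranteed by Theorem~\ref{thm:cob}(a). The boundary is the unit cosphere bundle, diffeomorphic to $T^3$, and the Reeb flow is the geodesic flow of the norm $\|\cdot\|$ on $T^2$. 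Its closed orbits are the closed geodesics, indexed by their homology classes $(a,b)\in\Z^2=H_1(T^2)$, and by Legendre duality the orbit in class $(a,b)$ has symplectic action exactly $\|(a,b)\|$. These orbits occur in $S^1$-families, so $\lambda$ is degenerate (Morse--Bott); I would perturb it to a nondegenerate form as in the definition of $c_k$ and recover the answer by a limit in which the actions converge to the geodesic lengths.

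Next I would invoke the $T^3$ computation of \cite{t3}. Since every Reeb orbit lies in a class with trivial fiber component, an orbit set representing $\Gamma=0$ is a collection $\{((a_i,b_i),m_i)\}$ with $\sum_i m_i(a_i,b_i)=0$; arranging the edge vectors $v_i=m_i(a_i,b_i)$ in order of angle identifies such generators with convex lattice polygons $\Lambda$, the closing-up condition being precisely $\Gamma=0$. Under this dictionary the symplectic action of the generator equals $\sum_i\|v_i\|=\ell_{\|\cdot\|}(\Lambda)$, and the trivial one-point polygon corresponds to $[\emptyset]$. The main content I would borrow from \cite{t3} is the homological statement: the class $\eta_\Lambda$ of each such polygon is nonzero, and on homology the $U$ map acts by ``rounding a corner,'' sending $\eta_\Lambda$ to polygon classes with exactly one fewer enclosed lattice point, while $U$ annihilates the one-point polygon.

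With this dictionary the theorem follows formally. Because $U$ lowers $|P_\Lambda\cap\Z^2|$ by one and kills the one-point polygon, the nonzero classes $\eta$ with $U^k\eta=[\emptyset]$ are exactly the nonzero elements of the span of those $\eta_\Lambda$ with $|P_\Lambda\cap\Z^2|=k+1$, and the action-minimizing such class is a single $\eta_\Lambda$. Hence
\[
c_k(T_{\|\cdot\|^*})=\min\{\ell_{\|\cdot\|}(\Lambda) : |P_\Lambda\cap\Z^2|=k+1\},
\]
the minimum being attained because a bound on $\ell_{\|\cdot\|}(\Lambda)$ bounds the diameter of $\Lambda$, leaving only finitely many lattice polygons up to translation, while both the length and the lattice-point count are translation invariant.

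The hard part is the input from \cite{t3}: establishing the polygon dictionary at the level of homology rather than merely generators, in particular matching the holomorphic-curve (equivalently Seiberg--Witten) $U$ map with the corner-rounding operation and verifying that $|P_\Lambda\cap\Z^2|$ is exactly the quantity governing when iterated $U$ reaches $[\emptyset]$. Alongside this one must carry out the Morse--Bott perturbation and limiting argument needed to pass from a nondegenerate approximation back to the degenerate norm contact form while keeping the perturbed actions convergent to the $\|\cdot\|$-lengths of the polygons. I would also check that degenerate (segment) polygons are correctly counted, since these govern the smallest values of $k$.
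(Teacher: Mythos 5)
The survey does not actually prove Theorem~\ref{thm:dual}: it only states it, citing \cite{qech} and remarking that it is ``proved using results from \cite{t3}.'' Your proposal reconstructs precisely that route---viewing $T_{\|\cdot\|^*}$ as a Liouville domain whose boundary is $T^3$ with the (Morse--Bott degenerate) geodesic-flow contact form, invoking the lattice-polygon model of the ECH of $T^3$ from \cite{t3} in which the action of a generator is $\ell_{\|\cdot\|}(\Lambda)$ and $U$ rounds a corner, removing one lattice point, and then reading off $c_k$ from the definition---so it matches the cited proof in both approach and in correctly locating the hard input (the homology-level polygon dictionary and $U$-map computation, plus the perturbation/limit arguments) in \cite{t3}.
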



Finally, we remark that in all known examples, the ECH
capacities asymptotically recover the symplectic volume, via:

\begin{conjecture}
\label{conj:Liouville}
\cite{qech}
Let $(X,\omega)$ be a four-dimensional Liouville domain such that
$c_k(X,\omega)<\infty$ for all $k$.  Then
\[
\lim_{k\to\infty}\frac{c_k(X,\omega)^2}{k} = 4\op{vol}(X,\omega).
\]
\end{conjecture}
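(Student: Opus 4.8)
The plan is to reduce the statement to an intrinsic \emph{Weyl law} for embedded contact homology, and then to split the argument into a lower bound obtained from symplectic packing and an upper bound obtained from Seiberg--Witten theory. First I would rewrite the right-hand side in contact-geometric terms. If $\lambda'$ is a primitive of $\omega$ on $X$ restricting to the contact form $\lambda$ on $Y=\partial X$, then Stokes' theorem applied to $\lambda'\wedge d\lambda'$ gives $\int_X\omega\wedge\omega=\int_Y\lambda\wedge d\lambda$, so that $4\op{vol}(X,\omega)=2\int_Y\lambda\wedge d\lambda$. Since $c_k(X,\omega)=c_k(\partial X,\lambda)$ by definition, the conjecture is equivalent to the assertion that for any nondegenerate contact form,
\[
\lim_{k\to\infty}\frac{c_k(Y,\lambda)^2}{k}=2\int_Y\lambda\wedge d\lambda,
\]
the degenerate case following by approximation (the cobordism maps of Theorem~\ref{thm:cob} make $c_k$ continuous under $C^0$-small changes of $\lambda$).

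For the lower bound $\liminf_k c_k(X,\omega)^2/k\ge 4\op{vol}(X,\omega)$ I would use inner packings by ellipsoids. The Ellipsoid axiom reduces the conjecture for a single $E(a,b)$ to a lattice-point count: the number of pairs $(m,n)\in\N^2$ with $am+bn\le L$ is asymptotic to the area $L^2/(2ab)$ of the corresponding triangle, so $c_k(E(a,b))\sim\sqrt{2abk}$ and $c_k(E(a,b))^2/k\to 2ab=4\op{vol}(E(a,b))$. Next I would prove an elementary lemma: if $a_\bullet$ and $a'_\bullet$ satisfy $a_k^2/k\to 4v$ and $(a'_k)^2/k\to 4v'$, then $(a_\bullet\# a'_\bullet)_k^2/k\to 4(v+v')$. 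Indeed $(a_\bullet\# a'_\bullet)_k=\max_{i+j=k}(a_i+a'_j)\approx\max_{i+j=k}2(\sqrt{vi}+\sqrt{v'j})$, and optimizing the split by Cauchy--Schwarz forces $i/k\to v/(v+v')$ and yields the stated limit. By the Disjoint Union axiom this shows that any finite disjoint union of ellipsoids satisfies the conjecture. Finally, since $\op{int}(X)$ can be exhausted by symplectically embedded disjoint unions of ellipsoids whose total volume tends to $\op{vol}(X,\omega)$ (a symplectic packing input), Monotonicity together with the previous step gives the desired lower bound after letting the volume defect tend to zero.

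The hard direction is the upper bound $\limsup_k c_k(X,\omega)^2/k\le 4\op{vol}(X,\omega)$, and here no embedding trick suffices: enlarging $X$ only increases capacities, so the bound must come from the internal structure of ECH. The key is an asymptotic relation between symplectic action and ECH index, namely that the generators $\alpha$ relevant to $c_k$ lie in grading $2k$ and the minimal such action satisfies $\mc{A}(\alpha)^2\sim\big(\int_Y\lambda\wedge d\lambda\big)\,I(\alpha)$, which for $I(\alpha)=2k$ gives exactly $c_k^2\sim 2\big(\int_Y\lambda\wedge d\lambda\big)k=4\op{vol}(X,\omega)\,k$; the ellipsoid computation above is the model case. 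I expect this relation to be the main obstacle. To prove it I would pass through Taubes's isomorphism between $ECH_*(Y,\lambda,0)$ and Seiberg--Witten Floer cohomology: ECH generators of action $\le L$ correspond to solutions of Taubes's perturbed Seiberg--Witten equations for large parameter $r$, and the ECH index corresponds, up to bounded error, to the relative Seiberg--Witten grading computed by the spectral flow of the associated family of Dirac operators. The required statement then becomes a Weyl law for this spectral flow, whose leading term is a curvature integral proportional to $\int_Y\lambda\wedge d\lambda$; the crux is to control the error term \emph{uniformly in $k$}, so that it is negligible after dividing by $k$. Combining this Weyl law (upper bound) with the packing estimate (lower bound) would complete the proof.
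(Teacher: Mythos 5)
This statement is a \emph{conjecture} in the paper, not a theorem: the survey states it as open (``in all known examples\dots''), so there is no proof of the paper to compare yours against, and your proposal must be judged as an attempt on an open problem. Within that framing, the ``easy'' half of what you write is essentially sound. The reduction $4\op{vol}(X,\omega)=2\int_Y\lambda\wedge d\lambda$ via Stokes is correct; the lattice-point count giving $c_k(E(a,b))^2/k\to 2ab=4\op{vol}(E(a,b))$ is correct; your lemma on the asymptotics of the operation $\#$ is correct and elementary (Cauchy--Schwarz as you say); and combining these with the Ellipsoid, Disjoint Union, and Monotonicity axioms, together with a volume-filling packing of $\op{int}(X)$ by finitely many disjoint ellipsoids (a true and standard fact, but an input you would need to state and cite precisely), does yield the lower bound $\liminf_{k\to\infty}c_k(X,\omega)^2/k\ge 4\op{vol}(X,\omega)$.

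The genuine gap is the upper bound, and it is not a peripheral one: it is the entire content of the conjecture. Your assertion that the minimal action of classes in grading $2k$ satisfies $\mc{A}(\alpha)^2\sim\bigl(\int_Y\lambda\wedge d\lambda\bigr)I(\alpha)$ is not a lemma you establish anywhere; it is a restatement of the conjecture for nondegenerate contact forms. Rephrasing it through Taubes's isomorphism as ``a Weyl law for the spectral flow of the Dirac operator family'' identifies the right tool, but it proves nothing by itself: the known comparisons between ECH generators of action at most $L$ and Seiberg--Witten solutions at large parameter $r$, and between the ECH index and spectral flow, carry error terms, and the whole difficulty is to show these errors are $o(k)$ uniformly in the regime $k\to\infty$, $L\to\infty$ --- exactly the step you flag as ``the crux'' and then leave open. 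So what you have is a correct reduction of the conjecture to a quantitative spectral-flow estimate, plus a complete proof of the easier inequality; that is a program, not a proof. (For perspective: this program is the natural one, and it is essentially how the conjecture was eventually settled, by Cristofaro-Gardiner, Hutchings, and Ramos; the substance of that work is precisely the uniform spectral-flow estimates missing here.)
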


\noindent
Of course, it is the deviation of $c_k(X,\omega)^2/k$ from
$4\op{vol}(X,\omega)$ that gives rise to nontrivial symplectic
embedding obstructions.

\paragraph{Acknowledgments.}
  I thank Paul Biran and Dusa McDuff for patiently explaining to me
  the ball packing and ellipsoid embedding stories.  This work was
  partially supported by NSF grant DMS-0806037.

\end{document}